\theoremstyle{plain}
\newtheorem{theorem}{Theorem}
\newtheorem{proposition}{Proposition}[section]
\newtheorem{corollary}[proposition]{Corollary}
\newtheorem{lemma}[proposition]{Lemma}
\setlist[itemize,enumerate]{itemsep=4pt,topsep=4pt,parsep=0pt}
\theoremstyle{remark}
\theoremstyle{definition}
\newcommand{\eps}{\varepsilon}
\newcommand{\R}{\mathds{R}}
\newcommand{\PP}{\mathbf{P}}
\renewcommand{\P}[1]{\PP \left [ #1 \right ]}
\newcommand{\Pp}[1]{\PP_p \left [ #1 \right ]}
\renewcommand{\cal}{\mathcal}
\renewcommand{\emph}{\textsf}
\title{Crossing probabilities for Voronoi percolation} \author{Vincent
  \textsc{Tassion}}
\date{\today} 
\begin{document}
\maketitle

\begin{abstract}
  We prove that the standard Russo-Seymour-Welsh theory is valid for
  Voronoi percolation. This implies that at criticality the crossing
  probabilities for rectangles are bounded by constants depending only
  on their aspect ratio. This result has many consequences, such as
  the polynomial decay of the one-arm event at criticality.
\end{abstract}

\section*{Introduction}

Russo-Seymour-Welsh (RSW) theory is one of the most important tools in the study
of planar percolation. A RSW-result generally refers to an inequality that
provides a bound on the probability to cross rectangles in the long direction,
assuming a bound on the probability to cross squares (or rectangles in the short
direction). Heuristically, this inequality is obtained by ``gluing'' together
square-crossings in order to obtain a crossing in a long rectangle.

Such results were first obtained for Bernoulli percolation on a lattice with a
symmetry assumption \cite{russo1978note}, \cite{seymour1978percolation},
\cite{russo1981critical}, \cite{kesten1982percolation}. For continuum
percolation in the plane, a RSW-result has been proved in~\cite{roy1990russo}
for open crossing events, and in~\cite{alexander1996rsw} for closed crossing
events. A RSW-theory has been recently developed for FK-percolation, see e.g.\@
\cite{beffara2012self, duminil2011connection, duminil2014continuity}. For
Voronoi percolation and for Bernoulli percolation on a lattice without symmetry,
weaker versions of the standard RSW-result have been proved
in~\cite{bollobas2006critical} and ~\cite{bollobas2010percolation},
respectively. Some RSW-techniques have also been recently developed for
Bernoulli percolation on quasi-planar graphs, called slabs. The case of a thin
slab is treated in~\cite{DNS12}. The study of thick slabs in
\cite{duminil2013absence} involves methods similar to those in the present
paper.

At criticality, RSW-results imply the following statement, called the
\emph{box-crossing property}: the crossing probability for any rectangle remains
bounded between $c$ and $1-c$, where $c>0$ is a constant depending only on the
aspect ratio of the rectangle (in particular it is independent of the scale).
For the terminology, we follow~\cite{grimmett2013} where the box-crossing
property is established for Bernoulli percolation on isoradial graphs.

For Bernoulli percolation, the original proof of the Russo-Seymour-Welsh theorem
relies on the spatial Markov property and independence: assuming that a
left-right crossing exists in a square, one can first find the lowest one by
exploring the region below it. Then, the configuration can be sampled
independently in the unexplored region (above the path). This argument does not
apply directly to models with spatial dependence. Voronoi percolation is one of
the most famous model in which the RSW theory and its consequences were expected
but the standard proof did not apply. A major breakthrough was achieved by
Bollobás and Riordan~\cite{bollobas2006critical}, who develop a clever
renormalization method and proved a weak form of RSW (see below). This was
strong enough for their purpose (to show that the critical probability is 1/2),
but too weak to imply all the consequences of the standard RSW. In the present
paper, we prove a stronger RSW for Voronoi. Our proof has some general features
in common with that for the weaker form of Bollobás and Riordan, but also has
major differences. The general structure is similar, since we also use
renormalization, but our scheme does not involve the same quantities as in the
Bollobás-Riordan approach. As in \cite{bollobas2006critical}, our proof does not
rely on exploration or specific properties of Voronoi percolation; it extends to
a large class of percolation models. We present our argument in the framework of
Voronoi percolation, since it is the archetypal example for which the ``lowest
path'' argument does not apply, due to local dependencies.


First introduced in the context of first passage percolation
\cite{vahidi1993upper}, planar Voronoi percolation has been an active area of
research, see for example \cite{bollobas2006critical, benjamini1998conformal,
  aizenman1998scaling, balister2005percolation}. It can be defined by the
following two-step procedure. (A more detailed definition will be given in
Section~\ref{crossing:sec:voronoi-percolation}.) First, construct the Voronoi
tiling associated to a Poisson point process in $\R^2$ with intensity $1$. Then,
color independently each tile black with probability $p$ and white with
probability $1-p$. The self-duality of the model for $p=1/2$ suggests that the
critical value is $p_c=1/2$. The first proof of this, given by
\cite{bollobas2006critical}, required some RSW-like bounds. Instead of a
standard formulation, that paper gave the following, weaker version of the
theorem: for $\rho\ge1$ and $s\ge1$, let $f_s(\rho)$ be the probability that
there exists a left-right black crossing in the rectangle $[0,\rho
  s]\times[0,s]$. For fixed $0<p<1$, they proved that $\inf_{s>0}f_s(1)>0$
implies that $\limsup_{s\to \infty}f_s(\rho)>0$ for all $\rho\ge1$. In other
words, a RSW-result has been obtained for arbitrarily large scale, but not for
all scales. This result was strengthened in~\cite{van2008box}: they proved that
the condition $\limsup_{s\to \infty} f_s(\rho)$ for some $\rho>0$ suffices to
imply that $\limsup_{s\to \infty}f_s(\rho)>0$ for every $\rho>0$. Our main
result is the following standard RSW for Voronoi percolation.
\begin{theorem}
  \label{crossing:thm:main}
  Let $0<p<1$ be fixed. If $\inf_{s\ge1} f_s(1)>0$, then  we have
  for
  all $\rho\ge 1$ $\inf_{s\ge1}  f_s(\rho)>0$.
\end{theorem}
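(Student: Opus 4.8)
The plan is to reduce the theorem to a single-step Russo--Seymour--Welsh inequality and then to bootstrap it. Throughout I use two soft properties of Voronoi percolation at the fixed density $p$: the Harris--FKG inequality for events that are increasing in the black colour (so that black crossings of overlapping boxes may be combined at a cost polynomial in their probabilities), and the invariance in law of the model under the isometries of $\R^2$, in particular rotation by $\pi/2$ (which turns uniform bounds on horizontal crossings into uniform bounds on vertical ones). Crucially, every estimate below is quantitative in the scale $s$ and is never obtained by letting $s\to\infty$; this is what yields a bound for $\inf_{s\ge1}$ rather than merely for $\limsup_{s\to\infty}$, and is the point on which the argument strengthens \cite{bollobas2006critical, van2008box}. \emph{First reduction.} It suffices to produce a fixed $\rho_0>1$ and a constant $c>0$ with $f_s(\rho_0)\ge c$ for all $s\ge1$: overlapping two horizontal $\rho_0 s\times s$ rectangles so that they meet in an $s\times s$ square and inserting a top--bottom crossing of that square (available with probability $\ge f_s(1)$), one obtains by planarity a horizontal crossing of a rectangle of aspect ratio $2\rho_0-1$; iterating $O(\log\rho)$ times and applying FKG at each step gives $f_s(\rho)\ge c(\rho)>0$ for every $\rho\ge1$, uniformly in $s\ge1$.

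The heart is thus the RSW step, where the classical ``lowest crossing'' exploration is unavailable because of the local dependencies of the Voronoi tiling. Following the renormalisation philosophy of \cite{bollobas2006critical} but with a different observable, I would work with a one-parameter family of \emph{turning events}. Take a box $Q_s$ of side-length a fixed multiple of $s$, and for $\alpha$ between $0$ and the side-length of $Q_s$ let $\mathcal A_s(\alpha)$ be the event that there is a black path inside $Q_s$ joining the left side of $Q_s$ to a boundary segment of length $\alpha$ sitting near the bottom-right corner of $Q_s$; this event is increasing in $\alpha$. Combining a horizontal and a vertical black crossing of $Q_s$ by FKG, one gets $\Pp{\mathcal A_s(L)}\ge c_1>0$, where $L$ is the side-length of $Q_s$ and $c_1$ depends only on the square-crossing constant. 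Define the \emph{turning radius} $\alpha_s$ to be the least $\alpha$ for which $\Pp{\mathcal A_s(\alpha)}\ge c_1/2$. Fix a small $\delta>0$. The argument splits according to the dichotomy: either $\alpha_s\ge\delta s$ (the \textbf{macroscopic} case), or $\alpha_s<\delta s$ (the \textbf{microscopic} case).

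In the macroscopic case the connection realised by $\mathcal A_s(\alpha_s)$ already spans horizontal extent of order $s$ while turning towards the boundary, and I would glue together a bounded number of reflected, rotated and translated copies of this event — the basic move being to reflect $Q_s$ across the side carrying the target segment, so that two copies meet near the corner in a square which is then crossed black with probability bounded below — and, by FKG, produce a black left--right crossing of a rectangle of aspect ratio $\rho_0=1+\delta'>1$ with probability at least a constant $c>0$; this is built from boxes of scale comparable to $s$, so it yields $f_s(\rho_0)\ge c$ at the scale $s$ itself. In the microscopic case, a more delicate gluing — pasting together a bounded number of copies of $\mathcal A_s(\alpha_s)$ and of square crossings, all at scale $s$, inside a box of scale $2s$ — should produce a turning event at scale $2s$ of turning radius at least $2\alpha_s+\delta s$, i.e.\ $\alpha_{2s}/(2s)\ge\alpha_s/s+\delta/2$. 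Since $\alpha_t/t\le1$ for all $t$, the microscopic case cannot recur through more than $\lceil2/\delta\rceil$ successive doublings of the scale; hence, starting a few doublings below any target scale, the macroscopic case — and with it a bound $f_{s_1}(\rho_0)\ge c$ — holds at some scale $s_1$ within a fixed bounded factor of, and not exceeding, the target. A final FKG/tiling argument transfers this bound upward: covering a rectangle of the target scale by a bounded number of overlapping $s_1$-scale rectangles and squares — legitimate because square crossings are available at \emph{every} scale $\ge1$ — gives $f_t(\rho_1)\ge c'>0$ at the target scale $t$ for a fixed $\rho_1>1$, the remaining compact range of small scales being absorbed at the end via positivity and continuity of $s\mapsto f_s(\rho_1)$. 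With the first reduction this gives $\inf_{s\ge1}f_s(\rho)>0$ for all $\rho\ge1$.

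I expect the decisive obstacle to be the combinatorial content of this dichotomy, and in particular the quantitative gain $\alpha_{2s}/(2s)\ge\alpha_s/s+\delta/2$ in the microscopic case: one must manufacture, by a \emph{fixed, finite} gluing that is legitimate for a genuinely dependent model — using only FKG, the lattice-free symmetries of Voronoi percolation, and planar topology, and never an exploration or a ``lowest path'' — a wider-turning event at a larger scale out of narrower-turning events at a smaller one, without losing any multiplicative constant that would degrade under iteration. Arranging the geometry of these copies so that the relevant black paths are \emph{forced} to meet, rather than merely likely to, is the crux; it is the freedom to connect to a boundary \emph{segment} of adjustable length — rather than a fixed point or a whole side — together with the full rotation invariance of the model, that makes such a rigid gluing possible.
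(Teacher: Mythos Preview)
Your architecture --- a scale-dependent observable $\alpha_s$, a dichotomy on its size, and gluing at favourable scales --- is very much in the spirit of the paper's proof. The genuine gap is exactly the step you yourself flag as the crux: the gain $\alpha_{2s}/(2s)\ge\alpha_s/s+\delta/2$ in the microscopic case. The method you describe cannot deliver that inequality. Pasting copies of $\mathcal A_s(\alpha_s)$ and square crossings inside a box of scale $2s$ so as to ``produce a turning event at scale $2s$ of turning radius at least $2\alpha_s+\delta s$'' means producing $\mathcal A_{2s}(\beta)$, with $\beta=2\alpha_s+\delta s$, with probability at least $c_1/2$; since you defined $\alpha_{2s}$ as the \emph{least} such $\beta$, this yields $\alpha_{2s}\le 2\alpha_s+\delta s$, i.e.\ the opposite sign. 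A \emph{lower} bound on $\alpha_{2s}$ is a statement that certain turning events at scale $2s$ are \emph{unlikely}; FKG together with symmetries only manufactures events that are \emph{likely}. With the inequality reversed, your iteration no longer forces the macroscopic case to occur within boundedly many doublings, and you recover at best the weak $\limsup$ statement of Bollob\'as--Riordan rather than a bound on $\inf_{s\ge1}$.

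This is precisely where the paper departs from your outline, and it does so by invoking the quasi-independence property~(iii), which your proposal omits entirely. The paper's $\alpha_s$ (defined via $\phi_s(\alpha)=\P{\cal H_s(0,\alpha)}-\P{\cal H_s(\alpha,s/2)}$) also cannot grow super-linearly, so favourable scales (those with $\alpha_s\le 2\alpha_{2s/3}$) exist infinitely often; as the Comment at the end of Section~\ref{crossing:sec:glueing} notes, this already gives the weak RSW from FKG and symmetries alone. The upgrade to a favourable scale within a bounded factor of \emph{every} target is Lemma~\ref{crossing:lem:circuitArg}: assuming for contradiction that $\alpha_t<s$ throughout $t\in[4s,C_1s]$, the independence of the annulus events $\cal A_{5^is}\cap\cal F_{5^is}$ forces a black circuit in $A_{s,C_1s}$ with high probability, and this circuit is then played against Property~(P2) to reach a contradiction --- thereby extracting the needed \emph{lower} bound $\alpha_{s'}\ge s$ at some nearby scale $s'$. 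That is exactly the missing ingredient in your microscopic case, and the paper explicitly leaves open (Remark~3) whether the independence input can be removed.
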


Our work also proves the ``high-probability''-version of RSW, stated
in Theorem~\ref{crossing:thm:highProb}. As we will see in
Section~\ref{sec:theor-impl-theor}, this second result can be derived
from Theorem~\ref{crossing:thm:main}.
\begin{theorem}
    \label{crossing:thm:highProb}
  Let $0<p<1$ be fixed. If $\lim_{s\to\infty} f_s(1)=1$, then we have for
  all $\rho\ge 1$ $\lim_{s\to\infty} f_s(\rho)=1$.
\end{theorem}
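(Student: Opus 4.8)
The plan is to deduce Theorem~\ref{crossing:thm:highProb} from Theorem~\ref{crossing:thm:main} together with the soft tools available for Voronoi percolation — the FKG inequality, the near-independence of the Voronoi configuration in well-separated windows, and elementary planar gluing of crossings. Write $\delta_s:=1-f_s(1)$, so the hypothesis reads $\delta_s\to0$. First, since $\delta_s\to0$ and, for each fixed $s$, $f_s(1)>0$ with $\inf_{1\le s\le s_0}f_s(1)>0$ for every $s_0$ (because $0<p<1$, so the black set covers a fixed square with positive probability), we get $\inf_{s\ge1}f_s(1)>0$. Theorem~\ref{crossing:thm:main} then yields constants $c(\rho)>0$ with $f_s(\rho)\ge c(\rho)$ for all $s\ge1$; by invariance of the model under rotation by $\pi/2$, the black set also crosses $[0,s]\times[0,\rho s]$ from top to bottom with probability at least $c(\rho)$, uniformly in $s$. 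This is the box-crossing property for the black set, and it is the only input we take from Theorem~\ref{crossing:thm:main}.

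Next, I reduce to a single aspect ratio. By monotonicity of $\rho\mapsto f_s(\rho)$ it suffices to prove $f_s(N)\to1$ for every integer $N\ge2$, and for this an elementary gluing reduces $N$ to $2$: a black left--right crossing of $[0,Ns]\times[0,s]$ is obtained by concatenating black left--right crossings of the overlapping $2\times1$ rectangles $[is,(i+2)s]\times[0,s]$, $0\le i\le N-2$ (two crossings of adjacent such rectangles need not meet, but each meets any black top--bottom crossing of the unit square lying in their overlap, which chains everything together), whence by a union bound
\[
  1-f_s(N)\ \le\ N\,(1-f_s(2))+N\,(1-f_s(1)).
\]
So everything comes down to the base case $f_s(2)\to1$.

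This base case is the heart of the argument. Box-crossing provides black crossings of long rectangles in the hard direction with probability bounded below by some absolute $c_0>0$ uniformly in the scale; the task is to amplify this into a probability tending to $1$, and it is here that the hypothesis $f_s(1)\to1$ is essential (at $p=1/2$ box-crossing holds but the conclusion fails). The natural route is to re-run, at scale $s$, the renormalization scheme from the proof of Theorem~\ref{crossing:thm:main}: declare a renormalized cell \emph{good} when the black set realises, in a bounded neighbourhood of the cell, a crossing pattern that occurs with probability $\ge c_0$ by box-crossing and that automatically connects to the pattern of any neighbouring good cell; feeding the improved input $f_s(1)\to1$ into the same argument makes cells good with probability $\to1$ and produces a black left--right crossing of $[0,2s]\times[0,s]$ with probability $\to1$. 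Equivalently, one can argue directly that the black set makes many parallel attempts to cross a $2\times1$ rectangle in well-separated windows, so that by near-independence and the square-root trick (via FKG) at least one succeeds with probability $\to1$, the successful crossing being completed to a crossing of the prescribed rectangle using the high-probability square crossings as connectors. I expect this amplification step to be the main obstacle: one must either track precisely the dependence of the renormalization on its input probabilities (so that ``inputs $\to1$'' forces ``outputs $\to1$''), or control the near-independence errors sharply enough that the passage from ``probability $\ge c_0$'' to ``probability $\ge1-\eps$'' genuinely closes.
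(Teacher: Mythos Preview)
Your reduction is sound: invoking Theorem~\ref{crossing:thm:main} for the box-crossing input, and then chaining overlapping $2\times1$ rectangles by a union bound to reach general $\rho$, both work (the paper reduces instead to $\rho=4/3$ and then applies Corollary~\ref{crossing:cor:standardInequalities}, but this is cosmetic). The genuine gap is exactly where you flag it: you never prove the base case $f_s(\rho_0)\to1$ for any fixed $\rho_0>1$, and neither sketch closes as written. ``Re-running the renormalization'' is not a proof without specifying which quantities track the improved input, and the second sketch --- many near-independent attempts at crossing a $2\times1$ sub-window --- cannot finish: a successful crossing of a small $2\times1$ window inside $[0,2s]\times[0,s]$ does not reach the left and right sides of the target rectangle, and ``completing it with high-probability square crossings as connectors'' would itself require \emph{long}-rectangle crossings at scale $s$ with probability close to $1$, which is precisely the statement you are trying to establish.

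The missing idea, which the paper supplies, is to apply the square-root trick not to $2\times1$ crossings but to \emph{where a square crossing lands}. Cover the right side of $B_{s/2}$ by about $1/\eta$ segments of length $2\eta s$; since $f_s(1)\to1$, the square-root trick yields some segment $\{s/2\}\times[y_s-\eta s,\,y_s+\eta s]$ that is reached from the left side of $B_{s/2}$ with probability tending to $1$. Separately --- and this is where box-crossing and quasi-independence are actually spent --- Theorem~\ref{crossing:thm:main} gives $\P{\cal A_t}\ge c$ for all $t$, and iterating this over disjoint concentric annuli via Lemma~\ref{crossing:lem:decoralate} shows that, for $\eta$ chosen small enough, a black circuit exists in the translate $(s/2,y_s)+A_{\eta s,\,s/4}$ with probability at least $1-\varepsilon$. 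Two symmetric copies of the localized square crossing (one from each half of a $4/3$-rectangle, both aimed at the same short segment) are then glued by this circuit via FKG, giving $f_{3s/2}(4/3)\ge\big(1-(1-f_s(1))^{\eta}\big)^{2}(1-\varepsilon)\to1$. In short: the hypothesis $f_s(1)\to1$ is used to localize the endpoint of a \emph{square} crossing, and box-crossing is used only to manufacture a high-probability gluing circuit at that localized point; your sketches had both ingredients present but assigned them the wrong roles.
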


At criticality (when $p=1/2$), it is known that $f_s(1)=1/2$ for all
$s$, and Theorem~\ref{crossing:thm:main} above implies the following new results.

\begin{theorem}\label{thm:2}
  Consider Voronoi percolation at $p=1/2$. Then the following holds.
  \begin{description}
  \item[{1. [Box crossing property]}] For all $\rho>0$, there exists
    $c(\rho)>0$ such that
    \begin{equation}
       c(\rho)<f_s(\rho)<1-c(\rho),\quad \text{for all $s\ge 1$}. 
    \end{equation}

  \item[{2. [Polynomial decay of the 1-arm event]}] Let $\pi_1(s,t)$ be
    the probability that there exists a black path from $[-s,s]^2$ to
    the boundary of $[-t,t]^2$. There exists
    $\eta>0$, such that, for every $1\le  s <t$,
    \begin{equation}
      \pi_1(s,t)\le \left(\frac s t \right )^\eta.      
    \end{equation}
   \end{description}
\end{theorem}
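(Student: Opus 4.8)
Both statements follow from Theorem~\ref{crossing:thm:main} together with the self-duality of the model at $p=1/2$: at $p=1/2$ every rectangle almost surely contains either a black crossing in one direction or a white crossing in the orthogonal one (and never both), and the black and white configurations have the same law.

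\textbf{Part 1.} Self-duality applied to a square, combined with the rotational symmetry of the law, gives $f_s(1)=1/2$ for every $s>0$; in particular $\inf_{s\ge1}f_s(1)=1/2>0$, so Theorem~\ref{crossing:thm:main} provides, for each $\rho\ge1$, a constant $c_0(\rho)>0$ with $f_s(\rho)\ge c_0(\rho)$ for all $s\ge1$. For the upper bound when $\rho\ge1$, duality gives $f_s(\rho)=1-\P{[0,\rho s]\times[0,s]\text{ has a black top--bottom crossing}}$; a top--bottom crossing of the sub-square $[0,s]^2$ is also one of the rectangle, so the subtracted probability is at least $f_s(1)=1/2$ and hence $f_s(\rho)\le1/2$. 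With $c(\rho):=\min\{c_0(\rho),1/4\}$ this settles $\rho\ge1$. For $0<\rho<1$ we use the dual identity $f_s(\rho)=1-f_{\rho s}(1/\rho)$ (with $1/\rho>1$): the bound $f_s(\rho)\ge1/2$ follows by nesting the crossing of the square $[0,\rho s]^2$, and $f_s(\rho)\le1-c(1/\rho)$ follows from the $\rho\ge1$ case applied to $f_{\rho s}(1/\rho)$. The only point not covered directly by Theorem~\ref{crossing:thm:main} is the case $\rho s<1$, and this is routine: for a fixed aspect ratio the crossing probability depends continuously on the scale, lies strictly between $0$ and $1$ at every positive scale (with positive probability a single Voronoi cell, of either colour, covers the whole box), and converges to $1/2$ as the scale tends to $0$, so the relevant infimum stays positive and the supremum stays below~$1$.

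\textbf{Part 2.} This is the standard deduction of one-arm decay from a box-crossing property, with one additional step imposed by the spatial dependence of Voronoi percolation. For $k\ge0$ put $A_k:=[-2^{k+1}s,2^{k+1}s]^2\setminus(-2^ks,2^ks)^2$. A black path witnessing $\pi_1(s,2^ns)$ must cross each of $A_0,\dots,A_{n-1}$ from its inner to its outer boundary, hence is prevented by a white circuit surrounding the origin inside any one $A_k$. Such a circuit arises by gluing four white crossings of sub-rectangles of $A_k$ of bounded aspect ratio (a routine use of the Harris--FKG inequality, which holds for Voronoi percolation), so by Part~1 and self-duality $A_k$ contains such a circuit with probability at least some $c_{\mathrm{circ}}>0$, uniformly in $k\ge0$ and $s\ge1$. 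Consequently $\pi_1(s,2^ns)\le\P{\text{none of }A_0,\dots,A_{n-1}\text{ contains a white circuit}}$. For Bernoulli percolation the annulus events are independent and one concludes at once; the obstacle here is that the coloured tiling inside $A_k$ is \emph{not} determined by the Poisson points near $A_k$, since an unusually large cell might reach $A_k$ from afar.

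\textbf{Localizing and concluding.} To overcome this I would introduce, for each $k$, the event $U_k$ that every point of $A_k$ lies within distance $\lambda\,2^ks$ of some Poisson point, where $\lambda$ is a large absolute constant. This event is measurable with respect to the Poisson process in a fattened annulus $B_k$ of the same dyadic scale as $A_k$, and a simple large-deviation bound makes $\P{U_k^c}<c_{\mathrm{circ}}/2$ for all $k\ge0$ and $s\ge1$ once $\lambda$ is large enough. On $U_k$ the coloured tiling restricted to $A_k$ coincides with the one generated by the points of $B_k$, so $W_k:=U_k\cap\{A_k\text{ contains a white circuit}\}$ is measurable with respect to the points of $B_k$ and their colours, satisfies $\P{W_k}\ge c_{\mathrm{circ}}-\P{U_k^c}\ge c_{\mathrm{circ}}/2$, and is contained in $\{A_k\text{ contains a white circuit}\}$. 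Retaining only every $m$-th index, with $m$ a fixed constant large enough that the annuli $B_k$ so retained are pairwise disjoint, the corresponding events $W_k$ are independent, and multiplying their complements gives $\pi_1(s,2^ns)\le(1-c_{\mathrm{circ}}/2)^{\lfloor n/m\rfloor}$. Taking $n=\lfloor\log_2(t/s)\rfloor$ gives a bound of the form $C(s/t)^{\eta'}$, from which the stated inequality $\pi_1(s,t)\le(s/t)^\eta$ follows by a routine adjustment of the exponent. I expect the localization step --- the quantitative ``no giant cell'' estimate that renders the dependence range of crossing and circuit events negligible compared with their scale --- to be the only genuine difficulty; the rest is bookkeeping together with the elementary duality relations used in Part~1.
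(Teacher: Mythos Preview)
Your proposal is correct and follows essentially the same route as the paper: Part~1 is identical (self-duality gives $f_s(1)=1/2$, Theorem~\ref{crossing:thm:main} gives the lower bound for $\rho\ge1$, the trivial bound $f_s(\rho)\le1/2$ gives the upper, and duality handles $\rho<1$), and for Part~2 your localization event $U_k$ is precisely the paper's $\cal F_s$ (Lemma~\ref{crossing:lem:decoralate}), after which both arguments intersect circuit events with the localization event to manufacture independence across well-separated annuli. The only cosmetic differences are that the paper spaces its annuli by powers of~$5$ (so that the localized regions $A_{5^is,5^{i+1}s}$ are automatically disjoint) rather than thinning a dyadic sequence, and that it works with black circuits and invokes duality only at the very end, whereas you work with white circuits throughout.
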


  \begin{description}
  \item[Remark 1.] Theorem~\ref{thm:2} is merely one potential
    application of Theorem~\ref{crossing:thm:main}. In the case of
    Bernoulli percolation, RSW bounds have many consequences. These
    include Kesten’s scaling relations, the computation of the
    universal exponents, and tightness of the interfaces in the study
    of scaling limits, to name a few. We expect that similar results
    can be derived for Voronoi percolation using Theorem~\ref{crossing:thm:main}.
  \item[Remark 2.] Our proof is not restricted to Voronoi percolation, and
    Theorem~\ref{crossing:thm:main} extends to a large class of planar
    percolation models. In order to help the reader interested in
    applying the technique of the present paper in a different
    context, we isolate in the framework of Voronoi percolation the
    three sufficient properties that we use (see
    Section~\ref{crossing:sec:voronoi-percolation} for the main
    definitions and notation):
    \begin{description}
    \item[\it(i)  Positive association.] If $\cal A,\, \cal B$ are two
      (black-)increasing events, we have $\P{\cal A\cap \cal B}\ge \P{\cal
        A}\P{\cal B}$.
    \item[\it(ii) Invariance properties.] The measure is invariant under
      translation, $\pi/2$-rotation and horizontal reflection.
    \item[\it(iii) Quasi-independence.] We have
      \begin{equation}
        \label{crossing:eq:4}
        \lim_{s\to \infty} \sup_{\substack{\cal A \in \sigma(A_{2s,4s})\\
            \cal B\in\sigma(\R^2\setminus A_{s,5s})}} |\P{\cal A \cap \cal B}-  \P{\cal A}\P{\cal B}|=0,
      \end{equation}
      where 
      $\sigma(S)$ denotes the sigma-algebra defined by the events
      measurable with respect to the coloring in $S$,
      \mbox{$S\subset\R^2$}.
    \end{description}

  \item[Remark 3.] The proof of the weak RSW of Bollobás and Riordan
    \cite{bollobas2006critical} also applies to a large class of
    model, and require only properties similar to (i), (ii) and (iii).
    With our approach, we also obtain a simple proof of the weak RSW
    of Bollobás and Riordan, using only Properties (i) and (ii); this
    proof is given in the comment at the end of
    Section~\ref{crossing:sec:glueing}. Interestingly, our proof of
    the weak RSW does not use any independence property. This suggests
    also that the standard RSW of Theorem~\ref{crossing:thm:main}
    could be proved using only positive association and invariance
    under some symmetries.
\end{description}

\section{Voronoi percolation}
\label{crossing:sec:voronoi-percolation}

\subsection{Definitions and  notation}

\paragraph{General notation.}
The Lebesgue measure of a measurable set $A\subset\R^2$ is denoted by
$\mathrm{vol}(A)$. The cardinality for a set $S$ is denoted by $|S|$
(with $|S|=+\infty$ if $S$ is infinite). We write $\mathrm d(u,v)$ the
Euclidean distance between two points $u,v\in\R^2$. Finally, for $0\le
s \le t <\infty$, we set \[B_s=[-s,s]^2\quad\text{and} \quad A_{s,t}=B_t\setminus B_s.\]

\paragraph{Voronoi tilings.} Let $\Omega$ be the set of all subsets
$\omega$ of $\R^2$ such that the intersection of $\omega$ with any
bounded set is finite. Equip $\Omega$ with the sigma-algebra generated
by the functions $\omega \mapsto |\omega\cap A|$, $A\subset \R^2$. To
each $\omega\in \Omega$ corresponds a Voronoi tiling, defined as
follows. For every $z \in \omega$, let $V_z$ be the \emph{Voronoi
  cell} of $z$, defined as the set of all points $v\in \R^2$ such that
$\mathrm{d}(v,z) \le \mathrm{d}(v,z')$ for all $z'\in \omega$. The
family $(V_z)_{z\in\omega}$ of all the cells forms a tiling of the
plane.

\paragraph{Voronoi percolation.}
Given a parameter $p\in [0,1]$, define the Voronoi percolation process
as follows. Let $X$ be a Poisson point process in $\R^2$ with density
$1$; for completeness, we recall that $X$ is defined as a random
variable in $\Omega$ characterized by the following two properties.
For every measurable set $A$ (with finite measure), $X\cap A$ contains
exactly $k$ points with probability
\begin{equation}
  \frac{\mathrm{vol}(A)^k}{k!} \exp(-\mathrm{vol}(A)),  
\end{equation}
and the random variables $|X\cap A_1|,\ldots,|X\cap A_n|$ are independent
whenever $A_1,\ldots,A_n$ are disjoint measurable sets. Declare each point $z\in
X$ to be black with probability $p$, and white with probability $1-p$,
independently of each other and of the variable $X$. Define then $X_{\mathrm b}$
and $X_{\mathrm w}$ to be respectively the set of black and white points in $X$.
Notice that we could have equivalently defined $X_{\mathrm b}$ and $X_{\mathrm
  w}$ as two independent Poisson processes with density $p$ and $1-p$, and then
formed $X=X_{\mathrm b}\cup X_{\mathrm w}$. Throughout this paper we write $\PP$
for the measure defining the random variable $(X_{\mathrm b},X_{\mathrm w})$ in
the space~$\Omega^2$. The definition of the model strongly depends on the value
of $p$. Nevertheless, in all the proofs, the value of $p$ will be fixed, and we
do not mention the dependence on the underlying $p$ in our notation.
  
  In Voronoi percolation, we consider the Voronoi tiling $(V_z)_{z\in
    X}$ associated to $X$, and we are interested in the random
  coloring of the plane obtained by coloring \emph{black} the points
  in the cells corresponding to the black points of $X$, and
  \emph{white} the points in the cells corresponding to white points
  of $X$. In other words, the set of black points is the union of
  the cells $V_z$, $z\in X_{\mathrm b}$, and the set of white points is the
  union of the cells $V_z$, $z\in X_{\mathrm w}$. The points at the boundary
  between two cells of different colors are both black and white.

\paragraph{Crossing events.}
In our study, events will be simpler to define in terms of the colors
of the points in $\R^2$. For $S\subset \R^2$, we say that an event is
$S$-measurable if it is defined in terms of the colors in $S$.
Formally speaking, an event is $S$-measurable if it lies in the
sigma-algebra generated by the events $\{\text{All the points in $U$
  are black}\}$, $U\subset S$.

Let $A,B$ and $S$ be three subsets of $\R^2$ such that $A,B\subset S$.
We call \emph{black path from $A$ to $B$ in $S$} an injective
continuous map $\gamma: [0,1] \to S$ such that $\gamma(0)\in A$,
$\gamma(1)\in B$, and all the points in the Jordan arc $\gamma([0,1])$
are black. One can verify that the existence of a path from $A$ to $B$
in $S$ is an $S$-measurable event. In the same way, we define a
\emph{black circuit} in the annulus $A_{s,t}$, $s<t$ as a Jordan curve
included in $A_{s,t}$ such that the origin~$0$ is in its interior, and
all its point are black. \emph{White paths} and \emph{white circuits}
are defined analogously. Then, we define the circuit event by
\[
\cal A_s =\{\textrm{there exists a black circuit in the annulus
  $A_{s,2s}$} \}.\] Finally, for $\rho>0$ and $s>0$, we introduce
the crossing probability
\[
f_s(\rho)=\mathbf P \left[ \:
\begin{minipage}[c]{.54\linewidth}
  \textrm{there exists a black path from $\{0\}\times[0,s]$ to $\{\rho
    s\}\times[0,s]$ in the rectangle $[0,\rho s]\times[0,s]$}
\end{minipage}\: \right].
\]

\subsection{External ingredients}
\label{crossing:sec:external-ingredients}

\paragraph{Independence properties.}
One main difficulty in Voronoi percolation is the spatial dependency
between the colors of the points: given two fixed points in the plane,
there is a positive probability for them to lie on the same tile, thus
(for $0<p<1$) the probability that they are both black is larger than
$p^2$. Due to these correlations, we cannot use the standard ``lowest path''
argument discussed in the introduction. Nevertheless, the spatial
dependencies are only local and  the
color of a given point is determined with high probability by the
process restricted to a neighbourhood of it. More precisely, Lemma~3.2.\@ in~\cite{bollobas2006critical} states that the color of the points
in the box $B_s$ are determined with high probability by the process
$(X_{\mathrm b},X_{\mathrm w})$ restricted to $B_{s+2\sqrt{\log s}}$. In our approach,
this property is stronger than what we really need, and the following
lemma is sufficient. We consider the event
\[
 \cal F_s=\left \{\textrm{for every $z \in
A_{2s,4s}$, there exists some point $x \in X$ at distance $\mathrm
d(z,x) < s$}\right\}.
\]

\begin{lemma}
  \label{crossing:lem:decoralate}
  We have $\lim_{s\to\infty}\P{\cal F_s}=1$ and, for any
  $A_{2s,4s}$-measurable event $\cal E$, the event $\cal E \cap \cal
  F_s$ is measurable with respect to the restriction of $(X_{\mathrm
    b},X_{\mathrm w})$ to $A_{s,5s}$.
\end{lemma}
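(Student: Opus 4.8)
The plan is to handle the two assertions separately. For the probability statement $\lim_{s\to\infty}\P{\cal F_s}=1$, I would cover the annulus $A_{2s,4s}$ by a bounded number of balls of radius $s/4$ (the number needed is independent of $s$, since $A_{2s,4s}$ scaled by $1/s$ is a fixed annulus). If $\cal F_s$ fails, then some point $z\in A_{2s,4s}$ has no point of $X$ within distance $s$; the ball of radius $s/4$ containing $z$ is then contained in a ball of radius $s/2$ around $z$ that is empty of points of $X$. Hence the complement of $\cal F_s$ is contained in the event that at least one of the finitely many balls (of radius $s/2$, centered at the centers of the covering balls) contains no point of $X$. Each such event has probability $\exp(-\pi (s/2)^2)=\exp(-\pi s^2/4)$ by the defining property of the Poisson process, so a union bound over the $O(1)$ balls gives $\P{\cal F_s^c}\le C\exp(-\pi s^2/4)\to 0$.

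For the measurability statement, the key geometric observation is that on the event $\cal F_s$, the color of any point $z\in A_{2s,4s}$ is determined by $X$ restricted to $A_{s,5s}$. Indeed, the color of $z$ is the color of the point of $X$ closest to $z$ (or is ambiguous/both on a measure-zero set of boundaries, which does not affect path-existence events since paths can be perturbed). On $\cal F_s$ there is a point $x\in X$ with $\mathrm d(z,x)<s$, so the nearest point to $z$ lies within distance $s$ of $z$, hence inside $A_{s,5s}$ (for $z\in A_{2s,4s}$, the ball of radius $s$ around $z$ is contained in $B_{5s}\setminus B_s=A_{s,5s}$). Therefore, knowing $(X_{\mathrm b},X_{\mathrm w})$ restricted to $A_{s,5s}$ and knowing that $\cal F_s$ holds determines the entire coloring of $A_{2s,4s}$. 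Consequently, for any $A_{2s,4s}$-measurable event $\cal E$, the intersection $\cal E\cap\cal F_s$ is measurable with respect to the restriction of the process to $A_{s,5s}$: one first checks whether $\cal F_s$ holds (itself $A_{s,5s}$-measurable, since the defining condition only references points of $X$ at distance $<s$ from points of $A_{2s,4s}$, all lying in $A_{s,5s}$), and if so reconstructs the coloring of $A_{2s,4s}$ from the restricted process and tests membership in $\cal E$.

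The main obstacle, such as it is, is purely bookkeeping: one must verify carefully that ``color of $z$'' is well-defined from the nearest point and that the boundary ambiguities (points equidistant from two or more Poisson points, and the convention that boundary points are both black and white) do not spoil measurability — but since crossing and circuit events are insensitive to the coloring on a Lebesgue-null set, and the relevant reconstruction holds off such a set, this is routine. The geometric inclusions ($z\in A_{2s,4s}$ and $\mathrm d(z,x)<s$ imply $x\in A_{s,5s}$; covering $A_{2s,4s}$ by $O(1)$ balls of radius $s/4$) are elementary and scale-invariant.
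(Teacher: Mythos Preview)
Your proposal is correct and follows essentially the same approach as the paper: cover $A_{2s,4s}$ by a bounded number of balls so that emptiness of one of them is necessary for $\cal F_s^c$, then apply a union bound to get $\P{\cal F_s^c}\le C\exp(-\pi s^2/4)$; and for measurability, observe that on $\cal F_s$ the nearest Poisson point to any $z\in A_{2s,4s}$ lies in $A_{s,5s}$. The paper is slightly more direct in the first part (covering by balls of diameter $s$ and noting that if each contains a Poisson point then $\cal F_s$ holds), whereas your radius-$s/4$ covering followed by radius-$s/2$ emptiness balls is a mildly roundabout route to the same bound, and your phrasing ``the ball of radius $s/4$ containing $z$ is then contained in a ball of radius $s/2$ around $z$'' is a bit muddled---what you actually need (and implicitly use) is that the ball of radius $s/2$ around the covering center $c$ lies inside the empty ball of radius $s$ around $z$.
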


\begin{proof}
  Let us consider an absolute constant $C>0$ such that, for every
  $s\ge 1$, there exists a covering of $A_{2s,4s}$ by $C$ Euclidean
  balls of diameter $s$. Fix $s\ge 1$ and a covering of $A_{2s,4s}$
  by $C$ Euclidean balls of diameter $s$. Consider the event that each
  of these balls contains at least one point of the Poisson process
  $X$. Using that it is a sub-event of $\cal F_s$, we obtain
  \begin{equation}
    \P{\cal F_s}\ge 1-C \mathrm e^{-\pi s^2/4}.
  \end{equation}
  For the second part of the lemma, observe that the color of a point
  in $A_{2s,4s}$ is determined by the color of its closest point of
  the process $X$. When $\cal F_s$ holds, this point lies in
  $A_{s,5s}$. Thus, for any $U\subset A_{2s,4s}$, the event $\cal E_U$
  is measurable with respect to $(X_{\mathrm b}\cap A_{s,5s}, X_{\mathrm w} \cap
  A_{s,5s})$.
\end{proof}

\paragraph{FKG inequality.}
\label{crossing:sec:FKG}
The FKG inequality is an important tool allowing to ``glue'' black
paths. Its proof can be found in \cite{bollobas2010percolation}.
Before stating it, we need to define increasing events in the context
of Voronoi percolation. An event $\cal E$ is \emph{black-increasing}
if for any configurations $\omega=(\omega_b,\omega_w)$ and
$\omega'=(\omega_b',\omega_w')$, we have
\begin{equation}
  \left.
    \begin{array}[c]{c}
      \omega \in \cal E\\
      \omega_b\subset \omega_b' \text{ and }
      \omega_w \supset \omega_w'
    \end{array}
  \right \} \Rightarrow \omega'\in \cal E.
\end{equation}

\begin{proposition}[FKG inequality]
  \label{crossing:thm:fkg-inequality}
  Let $\cal E$ and  $\cal F$ be two black-increasing events, then
  \begin{equation}
    \P{\cal E\cap\cal F}\ge \P{\cal E}\P{\cal F}.    
  \end{equation}
\end{proposition}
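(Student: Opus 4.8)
The plan is to obtain the inequality from Harris's inequality for product measures, following \cite{bollobas2010percolation}. First I would pass to the description of the model by two independent Poisson processes given in Section~\ref{crossing:sec:voronoi-percolation}: take $(X_{\mathrm b},X_{\mathrm w})$ to be independent Poisson processes of intensities $p$ and $1-p$, and put on $\Omega^2$ the partial order $(\omega_b,\omega_w)\preceq(\omega_b',\omega_w')$ defined by $\omega_b\subseteq\omega_b'$ and $\omega_w\supseteq\omega_w'$. A black-increasing event is exactly an up-set for $\preceq$, so $\mathbf 1_{\cal E}$ and $\mathbf 1_{\cal F}$ are bounded increasing functions, and the assertion is that the law of $(X_{\mathrm b},X_{\mathrm w})$ has positive associations for $\preceq$. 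Since positive associations of a measure is insensitive to reversing the order (an increasing function for the reversed order is the negative of an increasing function for the original one), this reduces to two facts: (a) a single Poisson point process has positive associations for the inclusion order; and (b) a product of two independent measures, each with positive associations, has positive associations for the product order. Fact (b) is the standard Fubini computation with two applications of (a), so the content is (a).

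To prove (a) I would localise and then discretise. Granting for a moment the box version of (a) (proved just below), here is the localisation: writing $\mathcal G_R=\sigma(X\cap B_R)$ and using that the process inside $B_R$ is independent of the process outside, $\PP[\cal E\mid\mathcal G_R]$ is an increasing function of $X\cap B_R$, and by L\'evy's upward theorem $\PP[\cal E\mid\mathcal G_R]\to\mathbf 1_{\cal E}$ in $L^1$ (and likewise for $\cal F$); applying the box version to these bounded conditional probabilities and letting $R\to\infty$ then yields $\P{\cal E\cap\cal F}\ge \P{\cal E}\P{\cal F}$. It thus remains to prove positive associations for increasing functions of $X$ restricted to a fixed box. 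Inside $B_R$, partition the box into $n$ congruent squares of volume $\mathrm{vol}(B_R)/n$ and compare $X\cap B_R$ with the ``cell process'' that, independently over the $n$ squares, places a single point at the centre of a square with probability $1-e^{-p\,\mathrm{vol}(B_R)/n}$. The cell process is a finite product of Bernoulli variables, so Harris's inequality gives it positive associations; as $n\to\infty$ the cell process converges weakly to $X\cap B_R$, which transfers the inequality.

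The step I expect to be the main obstacle is this last transfer from the discrete approximations back to the continuum. One has to check that the increasing functions in question can be squeezed between increasing functions that are continuous for the vague topology on point configurations — so that weak convergence of the cell process really yields convergence of the corresponding expectations — and that the limit $n\to\infty$ preserves the inequality. For the crossing, circuit and $\cal F_s$-type events used later in the paper this is unproblematic, since up to null sets they are continuity events, but phrasing it for a completely general black-increasing event needs a monotone-class argument. A shorter alternative, if one is willing to quote it, is to invoke directly the positive-association property of Poisson point processes (they are weak limits of Bernoulli product measures) and combine it with fact (b).
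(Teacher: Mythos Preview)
The paper does not give a proof of this proposition at all; it simply states that the proof can be found in \cite{bollobas2010percolation}. Your outline is a correct sketch of essentially that argument --- reduce via independence and order-reversal on the white coordinate to positive associations of a single Poisson process, then localise, discretise, and apply Harris's inequality --- and you have rightly identified the passage from the grid approximation back to the continuum as the only step requiring real care. One trivial notational remark: in your cell process the exponent should carry the intensity of the Poisson process being approximated (which is $1$, $p$, or $1-p$ depending on which of $X$, $X_{\mathrm b}$, $X_{\mathrm w}$ you mean), so the symbol $p$ there is slightly overloaded.
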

\noindent The following  standard inequalities can be easily derived from
Proposition~\ref{crossing:thm:fkg-inequality}.

\begin{corollary}\label{crossing:cor:standardInequalities}
  Let $s\ge 1$.
  \begin{enumerate}
 \item\label{item:1} $f_s(2)\ge \P{\cal A_s}$,
  \item\label{item:2} $f_s(1+i\kappa)\ge f_s(1+\kappa)^i f_s(1)^{i-1}$ for any
    $\kappa>0$ and any $i\ge 1$,
  \item\label{item:3} $\P{\cal A_s}\ge f_s(4)^4$.
  \end{enumerate}
\end{corollary}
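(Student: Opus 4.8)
The plan is to run the three standard ``gluing'' constructions, in each case combining black crossings of rectangles with the FKG inequality (Proposition~\ref{crossing:thm:fkg-inequality}). Throughout I use the translation and $\pi/2$-rotation invariance to rewrite the crossing probability of a translated or rotated rectangle as a value of $f_s$, and I repeatedly invoke the elementary planar fact that two black paths crossing a rectangle in the two perpendicular directions must intersect. Recall $B_s=[-s,s]^2$.

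For item~\ref{item:1} I would establish the deterministic inclusion $\cal A_s\subseteq\{\text{there is a black left--right crossing of }N\}$, where $N:=[-s,s]\times[s,2s]$ is the ``top'' side-rectangle of the annulus $A_{s,2s}$; since $N$ is a translate of $[0,2s]\times[0,s]$, this gives $\P{\cal A_s}\le f_s(2)$. To prove the inclusion, let $C$ be a black circuit in $A_{s,2s}$. If $C$ contained no left--right crossing of $N$, then --- by the standard fact that a closed subset of a rectangle not joining its two vertical sides has a complement containing a path joining its two horizontal sides --- there would be a path $\delta\subseteq N\setminus C$ from the bottom edge of $N$ (which is the top edge of $B_s$) to the top edge of $N$ (a segment of $\partial B_{2s}$). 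Prolonging $\delta$ straight downward through $\mathrm{int}\,B_s$ to the origin and straight upward above $B_{2s}$ to infinity would produce a path from $0$ to infinity disjoint from $C$ (it meets only $N\setminus C$, $\mathrm{int}\,B_s$, and $\R^2\setminus B_{2s}$, all of which avoid $C$), contradicting the Jordan curve theorem since $0$ lies in the interior of $C$.

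For items~\ref{item:2} and~\ref{item:3} I would run the two standard chain gluings. In item~\ref{item:2}, put $R_j:=[(j-1)\kappa s,(1+j\kappa)s]\times[0,s]$ for $1\le j\le i$ (a $(1+\kappa)s\times s$ rectangle) and $Q_j:=R_j\cap R_{j+1}=[j\kappa s,(1+j\kappa)s]\times[0,s]$ for $1\le j\le i-1$ (an $s\times s$ square), and let $\cal G$ be the event that every $R_j$ has a black left--right crossing and every $Q_j$ has a black top--bottom crossing. On $\cal G$ the top--bottom crossing of each $Q_j$ meets the left--right crossings of both $R_j$ and $R_{j+1}$: indeed $Q_j$ has the full height of $R_j$ and of $R_{j+1}$, contains the right edge of $R_j$, and contains the left edge of $R_{j+1}$, so each of those left--right crossings restricts to a left--right crossing of $Q_j$, which the top--bottom crossing of $Q_j$ must intersect. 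Hence on $\cal G$ all these paths link up into a connected black subset of $\bigcup_j R_j=[0,(1+i\kappa)s]\times[0,s]$ touching its two vertical sides, so $\cal G$ is contained in the event defining $f_s(1+i\kappa)$. The $2i-1$ crossing events defining $\cal G$ are black-increasing, so repeated use of the FKG inequality gives $f_s(1+i\kappa)\ge\P{\cal G}\ge f_s(1+\kappa)^i f_s(1)^{i-1}$, where $\P{\text{top--bottom crossing of }Q_j}=f_s(1)$ by rotation and translation invariance. Item~\ref{item:3} is the same argument run around the annulus: glue black left--right crossings of the two $4s\times s$ rectangles $[-2s,2s]\times[s,2s]$ and $[-2s,2s]\times[-2s,-s]$ with black top--bottom crossings of the two $s\times 4s$ rectangles $[s,2s]\times[-2s,2s]$ and $[-2s,-s]\times[-2s,2s]$; each of these four crossings has probability $f_s(4)$, their pairwise overlaps are the four $s\times s$ corner squares of the annulus, consecutive crossings intersect exactly as above, and their union is a connected black set surrounding the origin inside $A_{s,2s}$, hence contains a black circuit --- so this event is contained in $\cal A_s$, and FKG yields $\P{\cal A_s}\ge f_s(4)^4$.

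The conceptual content here is light; the part that needs care is the planar-topology bookkeeping --- checking in items~\ref{item:2}--\ref{item:3} that the overlap squares are positioned so that the two perpendicular crossings are forced to meet, and making the Jordan curve argument (and the rectangle-crossing duality) in item~\ref{item:1} precise. All of these steps are standard, so I do not anticipate a genuine obstacle.
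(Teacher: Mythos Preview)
Your proposal is correct. The paper does not actually give a proof of this corollary; it simply states that the three inequalities ``can be easily derived from Proposition~\ref{crossing:thm:fkg-inequality}'' and moves on. Your argument is exactly the standard one that is being alluded to: the deterministic inclusion plus Jordan-curve reasoning for item~\ref{item:1}, and the overlap-and-glue constructions with FKG for items~\ref{item:2} and~\ref{item:3}. There is nothing to compare, and the topological bookkeeping you flag (the rectangle-crossing duality in item~\ref{item:1}, the forced intersections in the corner squares) is handled correctly.
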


\subsection{Organization of the proof of Theorem~\ref{crossing:thm:main}}
We fix $0<p<1$, and assume that there exists a constant $c_0>0$ such that
for all $s\ge 1$,
\begin{equation}
  \label{crossing:eq:5}
  f_s(1)\ge c_0.
\end{equation}
Our goal is to prove that $\inf_{s\ge1}\P{\cal A_s}>0$, and then apply
Corollary~\ref{crossing:cor:standardInequalities}, Item~\ref{item:1} and \ref{item:2}. Rather than
studying only the sequence $(\P{\cal A_s})_{s\ge 1}$, we introduce at
each scale $s$ a real value $\alpha_{s}$ and study the pair $(\P{\cal
  A_s}, \alpha_{s})_{s\ge1}$ altogether. (The quantity $\alpha_s$ is
defined at the beginning of Section~\ref{crossing:sec:glueing}.)
\begin{description}
\item[Step 1: definition of good scales.] In Section~\ref{crossing:sec:glueing}, a geometric construction valid
  only when $\alpha_{s}\le2\alpha_{2s/3}$ provides a RSW-result at
  scale $s$. We will refer to such scale as a ``good scale''.
\item[Step 2: renormalization.] In Section~\ref{crossing:sec:all-scales-are}, we use the independence
  properties of the model to show that the good scales are close to
  each other. More precisely, we construct an infinite sequence
  $s_1, s_2, \ldots$ of good scales such that $4s_i\le s_{i+1}\le C
  s_i$.
\end{description}

Throughout the proof, we will work with constants. By convention, they
are elements of $(0,\infty)$, and they do not depend on any parameter
of the model. In particular, they never depend on the scale parameter
$s$. These constants will generally be denoted by $c_0,c_1,\ldots$
or $C_0,C_1,\ldots$ (depending on whether they have to be thought
small or large).
\section{Gluing at good scales}
\label{crossing:sec:glueing}


Fix $s\ge 1$. For $-s/2 \leq \alpha \leq \beta \leq s/2$, define $\cal
H_s(\alpha,\beta)$ to be the event that there exists a black path in
the square $B_{s/2}$, from the left side to
$\{s/2\}\times[\alpha,\beta]$ (see Fig.~\ref{crossing:fig:eventH} for
an illustration). For $0\le \alpha \leq s/2$, define $\cal
X_s(\alpha)$ to be the event that there exist
\begin{itemize}
\item a black path $\gamma_{-1}$  in $B_{s/2}$ from $\{-s/2\}\times [-s/2,-\alpha]$ to $\{-s/2\}\times
  [\alpha,s/2]$,
\item a black path $\gamma_1$  in $B_{s/2}$ from $\{s/2\}\times [-s/2,-\alpha]$ to $\{s/2\}\times
  [\alpha,s/2]$,
\item a black path  in $B_{s/2}$ from $\gamma_{-1}$ to $\gamma_1$.
\end{itemize}
\begin{figure}[htbp]
\centering
\hfill
\begin{minipage}[t]{.4\linewidth}
\centering
  \includegraphics[width=\linewidth]{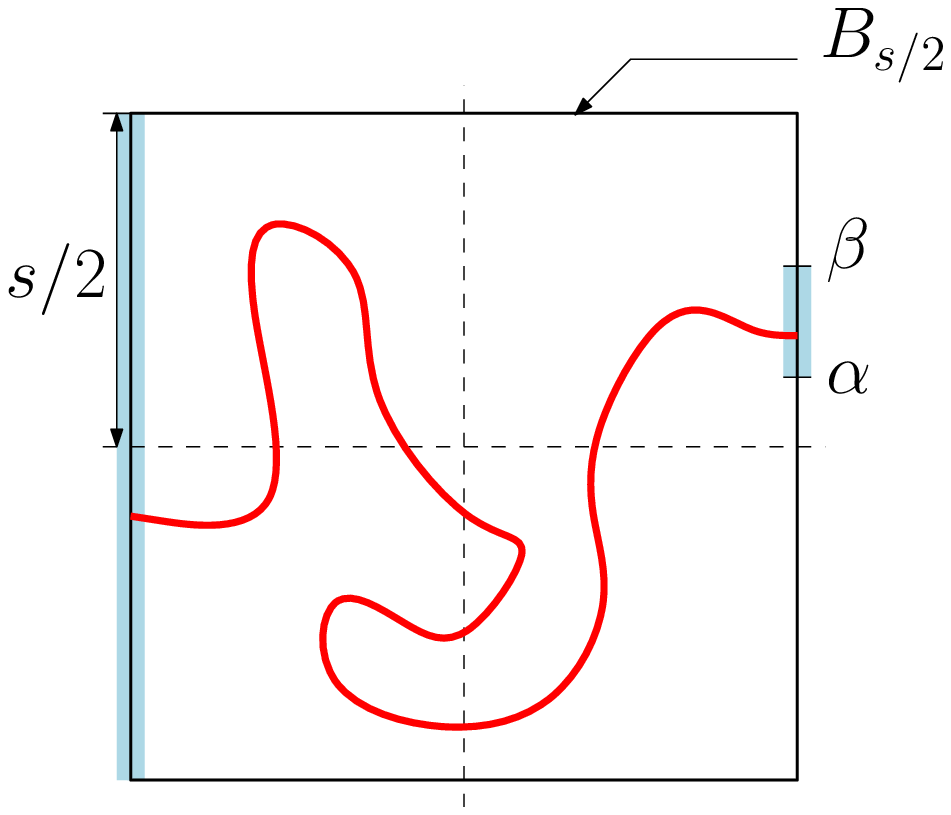}
  \caption{The event $\cal H_s(\alpha,\beta)$}
  \label{crossing:fig:eventH}
\end{minipage}
\hfill
\begin{minipage}[t]{.4\linewidth}
\centering
  \includegraphics[width=\linewidth]{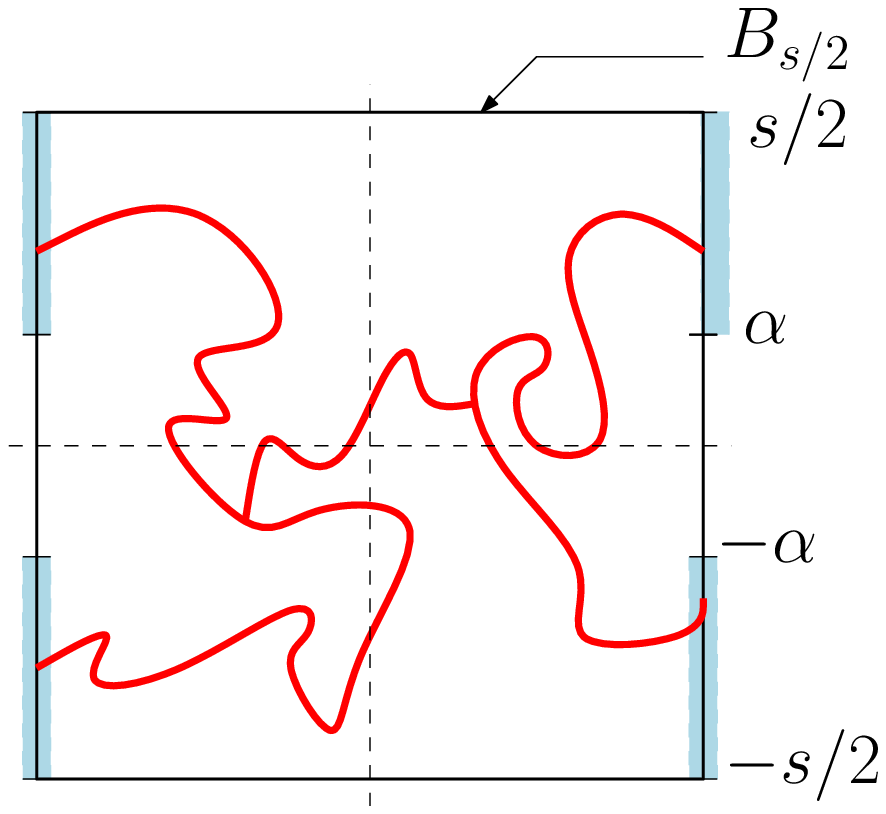}
  \caption{The event $\cal X_s(\alpha)$}
  \label{crossing:fig:eventX}
\end{minipage}
\hfill-
\end{figure}
Let $\phi_s:[0,s/2]\to [-1,1]$ be the function defined by
  \begin{equation}
    \phi_s(\alpha)=\P{\cal H_s(0,\alpha)} -\P{\cal
      H_s(\alpha,s/2)},\quad 0\le \alpha\le s/2.
  \end{equation}
  One can verify that $\phi_s$ is continuous, strictly increasing, and
  satisfies $\phi_s(0)\le 0$. In addition, if we assume that
  Equation~\eqref{crossing:eq:5} holds, then symmetry implies
  $\phi_s(s/2)\ge c_0/2$.

  \begin{lemma}
    \label{crossing:lem:split}
    Assume that Equation~\eqref{crossing:eq:5} holds. Then for every $s\ge
    1 $, there exists \mbox{$\alpha_{s} \in [0,s/4]$} such that the following two
    properties hold.
    \begin{enumerate}[\bf(P1)]
    \item\label{crossing:item:1} For all $0\le \alpha\le \alpha_{s}$, $\P{\cal X_s(\alpha)}\ge
    c_1$.
  \item\label{crossing:item:2} If $\alpha_{s}<s/4$, then for all
    $\alpha_{s}\le \alpha\le s/2$,
    $\P{\cal H_s(0,\alpha)}\ge c_0/4 +\P{\cal H_s(\alpha,s/2)}$.
  \end{enumerate}
\end{lemma}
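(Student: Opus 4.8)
The plan is to choose $\alpha_s$ as a ``balance point'' of $\phi_s$, to read \textbf{(P2)} off the monotonicity of $\phi_s$, and to obtain \textbf{(P1)} by realising $\cal X_s(\alpha_s)$ as a consequence of four ``$\cal H$-type'' crossings spliced together by one transverse crossing of $B_{s/2}$. Since $\phi_s$ is continuous and strictly increasing with $\phi_s(0)\le 0< c_0/4\le c_0/2\le\phi_s(s/2)$, there is a unique $\alpha^*\in(0,s/2)$ with $\phi_s(\alpha^*)=c_0/4$, and I set $\alpha_s:=\min\{\alpha^*,\,s/4\}\in(0,s/4]$. If $\alpha_s<s/4$ then $\alpha_s=\alpha^*$, so for every $\alpha\in[\alpha_s,s/2]$ monotonicity gives $\phi_s(\alpha)\ge\phi_s(\alpha^*)=c_0/4$, i.e.\ $\P{\cal H_s(0,\alpha)}\ge c_0/4+\P{\cal H_s(\alpha,s/2)}$; this is exactly \textbf{(P2)}.

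Next I record the lower bound on $\P{\cal H_s(\alpha_s,s/2)}$ which will drive \textbf{(P1)}. Put $x=\P{\cal H_s(0,\alpha_s)}$ and $y=\P{\cal H_s(\alpha_s,s/2)}$. Since $\alpha_s\le\alpha^*$ we have $\phi_s(\alpha_s)\le c_0/4$, that is $x-y\le c_0/4$. Moreover $\cal H_s(0,\alpha_s)\cup\cal H_s(\alpha_s,s/2)=\cal H_s(0,s/2)$ and $\P{\cal H_s(0,s/2)}\ge\phi_s(s/2)\ge c_0/2$, so inclusion--exclusion together with the FKG inequality applied to the black-increasing events $\cal H_s(0,\alpha_s)$ and $\cal H_s(\alpha_s,s/2)$ yields $x+y-xy\ge\P{\cal H_s(0,s/2)}\ge c_0/2$. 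Feeding $x\le y+c_0/4$ into this inequality gives $y^2-(2-c_0/4)\,y+c_0/4\le 0$, hence $y\ge c_0/8$.

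Now I prove \textbf{(P1)}. Enlarging the two target segments can only make $\cal X_s(\alpha)$ easier, so $\cal X_s(\alpha)\supseteq\cal X_s(\alpha_s)$ for $0\le\alpha\le\alpha_s$, and it suffices to bound $\P{\cal X_s(\alpha_s)}$ from below. Write $L^-=\{-s/2\}\times[-s/2,-\alpha_s]$, $L^+=\{-s/2\}\times[\alpha_s,s/2]$, and let $R^\pm$ be their mirror images on the right side of $B_{s/2}$. Let $\cal P^\pm$ be the event that there is a black path in $B_{s/2}$ from $L^\pm$ to the right side, $\cal Q^\pm$ the event that there is a black path in $B_{s/2}$ from $R^\pm$ to the left side, and $\cal V$ the event that there is a black top--bottom crossing of $B_{s/2}$. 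Using invariance under the reflections across $\{x=0\}$ and $\{y=0\}$, each of $\cal P^\pm,\cal Q^\pm$ has the same probability as $\cal H_s(\alpha_s,s/2)$, hence is $\ge c_0/8$; and by $\pi/2$-rotation invariance $\P{\cal V}=f_s(1)\ge c_0$. All five events are black-increasing, so FKG gives
\[
 \P{\cal P^-\cap\cal P^+\cap\cal Q^-\cap\cal Q^+\cap\cal V}\ \ge\ (c_0/8)^4\,c_0 ,
\]
a positive constant which we call $c_1$. On this intersection, the crossing realising $\cal V$ separates the left side of $B_{s/2}$ from its right side, hence meets each of the four paths realising $\cal P^\pm$ and $\cal Q^\pm$; concatenating the $L^-$-path, a sub-arc of the $\cal V$-crossing, and the $L^+$-path gives a black path $\gamma_{-1}$ from $L^-$ to $L^+$ in $B_{s/2}$, the $R^\pm$-paths and the $\cal V$-crossing similarly give a black path $\gamma_1$ from $R^-$ to $R^+$, and the $\cal V$-crossing itself contains a black path joining $\gamma_{-1}$ to $\gamma_1$. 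Thus the above intersection is contained in $\cal X_s(\alpha_s)$, so $\P{\cal X_s(\alpha_s)}\ge c_1$, which is \textbf{(P1)}.

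The interpolation producing $\alpha_s$ and \textbf{(P2)} is routine once one has $\P{\cal H_s(0,s/2)}\ge c_0/2$ and FKG; the step I expect to be the crux is the construction in the last paragraph. Although $\cal X_s(\alpha_s)$ looks expensive, it follows from crossings whose probabilities we control, the point being to route each ``horseshoe'' all the way across $B_{s/2}$ and then splice the four resulting paths with one transverse crossing of the box. Checking that the intersection of these five events lies in $\cal X_s(\alpha_s)$ uses only the standard planar separation fact (a top--bottom crossing of a square disconnects its left side from its right side) and the fact that a concatenation of black paths contains a black path with the same endpoints.
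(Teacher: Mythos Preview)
Your proof is correct and follows essentially the same route as the paper: the same choice $\alpha_s=\min(\phi_s^{-1}(c_0/4),s/4)$, the same monotonicity argument for \textbf{(P2)}, and the same five-event FKG construction (four reflected copies of $\cal H_s(\alpha_s,s/2)$ plus a vertical crossing) for \textbf{(P1)}, yielding the identical constant $c_1=c_0(c_0/8)^4$. The only difference is that, to obtain $\P{\cal H_s(\alpha_s,s/2)}\ge c_0/8$, you invoke FKG and solve a quadratic, whereas the plain union bound $\P{\cal H_s(0,\alpha_s)}+\P{\cal H_s(\alpha_s,s/2)}\ge\P{\cal H_s(0,s/2)}\ge c_0/2$ together with $x-y\le c_0/4$ already gives $2y\ge c_0/4$ directly; your detour is harmless but unnecessary.
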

\noindent In the rest of the paper, Equation~\eqref{crossing:eq:5} is
always assumed to hold, and we fix for every $s\ge 1$ a real number
$\alpha_s \in [0,s/4]$ satisfying \ref{crossing:item:1} and
\ref{crossing:item:2} above.

\begin{proof}
  The properties of $\phi_s$ allow us to define
  \begin{equation}
    \alpha_s=\min\big(\phi_s^{-1}(c_0/4),s/4\big).
  \end{equation}
  With this definition, Property~\textbf{(P2)} is
  clearly satisfied. We only need to show that Property~\textbf{(P1)} holds. If
  $\alpha \le \alpha_{s}$, our hypothesis~\eqref{crossing:eq:5} and
  symmetries imply that \vspace{-5pt}
  \begin{spreadlines}{.32\baselineskip}
    \begin{align}
      c_0&\le 2 \P{\cal H_s(0,s/2)}\\
      &\le 2\P{\cal H_s(0,\alpha)}+2\P{\cal H_s(\alpha,s/2)}\\
      &\le 2 \phi_s(\alpha) +4\P{\cal H_s(\alpha,s/2)}\\
      &\le c_0/2 + 4\P{\cal H_s(\alpha,s/2)}.
    \end{align}
  \end{spreadlines}
  We obtain, for every $\alpha\le \alpha_{s}$,
  \begin{equation}
    \P{\cal H(\alpha,s/2)}\ge c_0/8.
  \end{equation}
  A sub-event of $\cal X_s(\alpha)$ can be obtained by intersecting four
  symmetric versions of $\cal H_s(\alpha,s/2)$ with the event that there
  exists a top-down crossing in $B_{s/2}$. The FKG inequality implies then
\begin{equation}
  \P{\cal X_s(\alpha)}\geq c_0(c_0/8)^4.
\end{equation}
This concludes the first part of the lemma with $c_1=  c_0(c_0/8)^4$.
\end{proof}

\begin{lemma}
  \label{crossing:lem:glue} 
  There exists $c_2>0$ such that for all $s\ge 2$, the inequality
  $\alpha_{s}\le 2\,\alpha_{2s/3}$ implies
  \begin{equation}
    \label{crossing:eq:22}
   \P{\cal A_{s}}\ge c_2.
  \end{equation}
\end{lemma}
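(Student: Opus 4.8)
The plan is to realise a black circuit in $A_{s,2s}$ as the union of an absolute number of black paths, each living in a translated, $\pi/2$-rotated or reflected copy of a scale-$s$ or scale-$2s/3$ box and each realising an event of probability bounded below uniformly in $s$, and then to glue them with the FKG inequality (Proposition~\ref{crossing:thm:fkg-inequality}); this is legitimate since all crossing and circuit events are black-increasing and $\PP$ is invariant under translations, $\pi/2$-rotations and reflections, so every transported copy keeps the same probability. The bricks I would use are: (i) left--right crossings of $s\times s$ squares, of probability at least $c_0$ by~\eqref{crossing:eq:5}; (ii) crossings of $s\times s$ squares landing in a controlled window on one side, of probability at least $c_0/8$ --- this comes from the inequality $\P{\cal H_s(\alpha,s/2)}\ge c_0/8$ valid for $\alpha\le\alpha_s$ (established, with no case distinction, inside the proof of Lemma~\ref{crossing:lem:split}), together with Property~\textbf{(P2)} when $\alpha_s<s/4$; and (iii) the events $\cal X_{2s/3}(\alpha)$ with $\alpha\le\alpha_{2s/3}$, of probability at least $c_1$ by Property~\textbf{(P1)} at scale $2s/3$ (legitimate since $2s/3\ge1$ when $s\ge2$).

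First I would reduce the claim to producing, with probability bounded below uniformly in $s$, a left--right black crossing of a rectangle of some \textbf{fixed} aspect ratio $\rho_0>1$. This is enough: $f_s$ is non-increasing in $\rho$, so Corollary~\ref{crossing:cor:standardInequalities}(\ref{item:2}) turns a uniform lower bound on $f_s(\rho_0)$ into one on $f_s(4)$, and then Corollary~\ref{crossing:cor:standardInequalities}(\ref{item:3}) gives $\P{\cal A_s}\ge f_s(4)^4\ge c_2$. (Concretely, this last step glues four $\pi/2$-rotated copies of a long crossing of $[-2s,2s]\times[s,2s]$ inside the corner squares $[s,2s]^2$, where a horizontal and a vertical crossing of one square must intersect.)

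The core of the proof is then the remaining step: manufacturing the long crossing by concatenating a bounded number of bricks, a scale-$s$ one flanked by copies of $\cal X_{2s/3}(\alpha^*)$, with the decisive choice $\alpha^*=\alpha_s/2$. The hypothesis $\alpha_s\le 2\alpha_{2s/3}$ enters here, and essentially only here, to guarantee simultaneously that $\alpha^*\le\alpha_{2s/3}$ (so that $\P{\cal X_{2s/3}(\alpha^*)}\ge c_1$) and that $\alpha^*$ is large enough --- of exact order $\alpha_s$ --- for the windows at the two scales to match. Consecutive bricks are placed with a small overlap, and one argues by planar topology that they are forced to share a black point: the windowed scale-$s$ brick reaches a sub-segment of length of order $\alpha_s$ on the shared edge (Property~\textbf{(P2)} when $\alpha_s<s/4$, and the bound $\P{\cal H_s(\alpha,s/2)}\ge c_0/8$ when $\alpha_s=s/4$), while an $\cal X_{2s/3}(\alpha^*)$-brick carries a boundary path $\gamma_{-1}$ (or $\gamma_1$) which, completed by the corresponding edge segment, bounds a Jordan domain whose trace on that edge has length of order $2\alpha_{2s/3}$; the inequality $\alpha_s\le 2\alpha_{2s/3}$ is exactly what makes the first sub-segment sit inside the second, after which the brick's crossing, starting inside this domain, must cross its boundary, hence meet $\gamma_{-1}$. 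With $O(1)$ bricks, each of probability at least $\min(c_0,c_1,c_0/8)$, FKG yields the desired lower bound on $f_s(\rho_0)$.

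The step I expect to be the main obstacle is precisely this planar-topology bookkeeping: choosing the positions and the overlaps of the $\cal X_{2s/3}$-bricks and the scale-$s$ brick --- together with, where needed, auxiliary square or $\cal H_s$-bricks to realise the vertical connections and the corner turns --- so that \textbf{every} pair of consecutive bricks is genuinely forced to connect, while keeping all parameters in the ranges $\alpha\le\alpha_s$, respectively $\alpha\le\alpha_{2s/3}$, for which we have lower bounds. Pinning down the constants and fitting the borderline case $\alpha_s=s/4$ into the same scheme is the delicate part; the rest --- counting the bricks, identifying each with one of the three events above, and combining via FKG and the symmetries of $\PP$ --- is routine.
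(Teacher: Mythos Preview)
Your strategy is correct and coincides with the paper's: reduce to a uniform lower bound on $f_s(\rho_0)$ for some fixed $\rho_0>1$ via Corollary~\ref{crossing:cor:standardInequalities}, and obtain that bound by gluing (FKG) a bounded number of $\cal H_s$-type and $\cal X_{2s/3}$-type bricks, with the hypothesis $\alpha_s\le 2\alpha_{2s/3}$ used precisely to make the windows at the two scales compatible.

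The paper's implementation is, however, tighter than your sketch and dissolves the ``main obstacle'' you flag. First, the borderline case $\alpha_s=s/4$ is handled separately and does \emph{not} use the hypothesis at all: five vertically stacked translates of $\cal X_s(s/4)$ already force a crossing of $[0,s]\times[0,2s]$, giving $f_s(2)\ge c_1^5$. Second, in the main case $\alpha_s<s/4$ the arrangement is the reverse of yours and uses only three bricks: a single $\cal X_{2s/3}(\alpha_{2s/3})$ in $B_{s/3}$, flanked by two $\cal H_s$-type events $\cal E,\cal E'$ in the overlapping $s\times s$ squares $R=(-s/6,-\alpha_{2s/3})+B_{s/2}$ and $R'=(s/6,-\alpha_{2s/3})+B_{s/2}$, each landing in the segment $\{\pm s/3\}\times[-\alpha_{2s/3},\alpha_{2s/3}]$. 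The parameter is taken to be $\alpha_{2s/3}$ itself (not $\alpha_s/2$): then $\P{\cal X_{2s/3}(\alpha_{2s/3})}\ge c_1$ is immediate from \textbf{(P1)}, and $\cal E$ is a translate of $\cal H_s(0,2\alpha_{2s/3})$, whose probability is at least $c_0/4$ by \textbf{(P2)} since $2\alpha_{2s/3}\ge\alpha_s$. The three events together force a horizontal crossing of $R\cup R'$, a rectangle of aspect ratio $4/3$, so $f_s(4/3)\ge c_1(c_0/4)^2$. No chain of bricks, no auxiliary corner constructions, and no delicate bookkeeping is needed. (Your sentence that the $\cal X$-gap has ``length of order $2\alpha_{2s/3}$'' is inconsistent with your choice $\alpha^*=\alpha_s/2$; with the paper's choice $\alpha^*=\alpha_{2s/3}$ it becomes exact.)
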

\begin{proof}
  We first treat the case $\alpha_{s}=s/4$. (In this case, we directly
  prove that \eqref{crossing:eq:22} holds, without using the
  hypothesis $\alpha_{s}\le 2\,\alpha_{2s/3}$.) By Property
  \ref{crossing:item:1} of Lemma~\ref{crossing:lem:split}, we have
  $\P{\cal X_s(s/4)}\ge c_1$, and it is easy to create a black
  crossing in a long rectangle. Consider for $i=0,\ldots,4$ the event
  $\cal E_i$ that there exists a black path from
  $\{0\}\times[(i-1)s/2,is/2]$ to $\{0\}\times[(i+1)s/2,(i+2)s/2]$ in
  the strip $[0,s]\times\R$. For every $i$, the event $\cal E_i$ has
  probability larger than $\P{\cal X_s(s/4)}$, and when all of them
  occur, it implies a vertical black crossing in the rectangle
  $[0,s]\times[0,2s]$. FKG inequality implies that $f_{s}(2)\ge
  c_1^5$. And hence, by Items~\ref{item:2} and \ref{item:3} of
  Corollary~\ref{crossing:cor:standardInequalities},
  \begin{equation}
       \P{\cal A_s}\ge (c_1^{15}c_0^2)^4.
  \end{equation}
  Now, let $s$ be such that $\alpha_{s}\leq 2 \alpha_{2s/3}$ and
  $\alpha_{s}< s/4$. We use the event $\cal X_{2s/3}(\alpha_{2s/3})$
  to connect at scale $2s/3$ two crossings at scale $s$. Consider the
  two squares $R=(-s/6,-\alpha_{2s/3})+B_{s/2}$ and
  $R'=(s/6,-\alpha_{2s/3})+B_{s/2}$. Notice that $B_{s/3} \subset R$
  and $B_{s/3}\subset R'$ since $\alpha_{2s/3}\leq s/6$. Let $\cal E$
  be the event that there exists a black path from left to
  $\{s/3\}\times[-\alpha_{2s/3},\alpha_{2s/3}]$ in $R$. Similarly,
  define $\cal E'$ as the event that there exists a black path from
  $\{-s/3\}\times[-\alpha_{2s/3},\alpha_{2s/3}]$ to right in $R'$.
  Since $\alpha_{s}\le 2\alpha_{2s/3}\le s/2$ and $\alpha_{s}< s/4$,
  Property~\ref{crossing:item:2} in Lemma~\ref{crossing:lem:split}
  ensures that both events $\cal E$ and $\cal E'$ have probabilities
  larger than $c_0/4$. Recall that, by Property~\ref{crossing:item:1}
  in Lemma~\ref{crossing:lem:split}, the event $\cal
  X_{2s/3}(\alpha_{2s/3})$ has probability larger than $c_1$.
         
  When the three events $\cal X_{2s/3}(\alpha_{2s/3})$, $\cal E$ and
  $\cal E'$ occur, a black path must exist from left to right in the
  rectangle $R\cup R'$ (see Fig.~\ref{crossing:fig:constr3}). The
  rectangle $R\cup R'$ has aspect ratio $4/3$, and FKG inequality
  implies
  \begin{align}
    f_{s}(4/3)&\geq\P{\cal X_{2s/3}(\alpha_{2s/3}) \cap \cal E\cap\cal E'}\\
    &\ge c_1\left(\frac{c_0}4\right)^2.
  \end{align}
  Then, as above, we use Items~\ref{item:2} and \ref{item:3} of
  Corollary~\ref{crossing:cor:standardInequalities} to conclude the
  proof.

  \begin{figure}[htp]
    \centering
    \includegraphics[width=.58\linewidth]{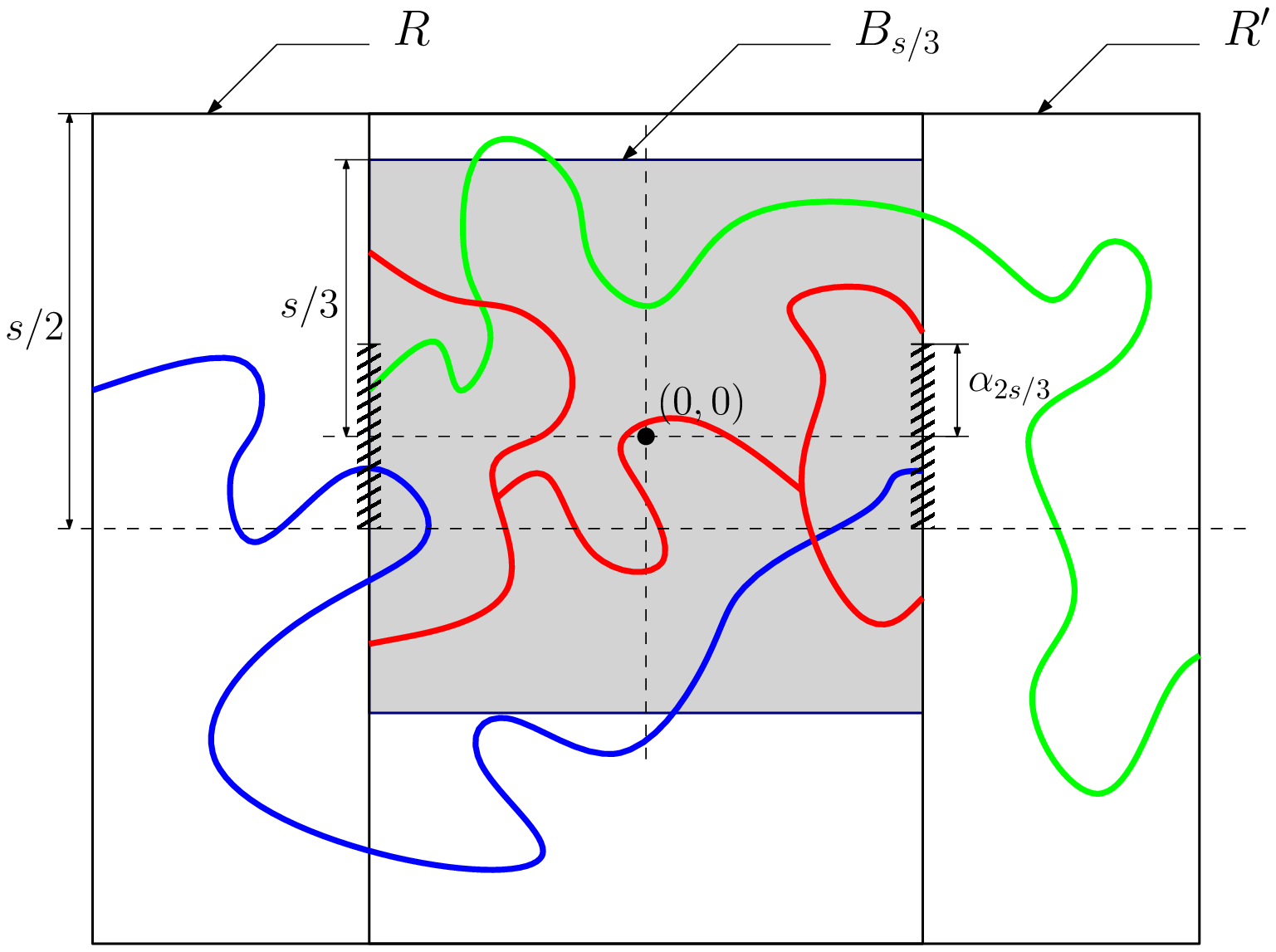}
    \caption{The simultaneous occurrence of $\cal X_{2s/3}(\alpha_{2s/3})$, $\cal E$ and $\cal E'$
      implies the existence of a horizontal crossing in $R\cup R'$.}
    \label{crossing:fig:constr3}
  \end{figure}
\end{proof}

\noindent\textbf{Comment.} Lemma~\ref{crossing:lem:glue} is central in our
approach. As soon as the inequality
\begin{equation}
\alpha_{s}\le 2\alpha_{2s/3}\label{eq:12},
\end{equation}
holds, we obtain a ``RSW statement'' at scale $s$. So far, we have used only
positive association, and the invariance of the measure under symmetries. The
independence property of Lemma~\ref{crossing:lem:decoralate} will be useful in
the next section, to show that the inequality of Equation~\eqref{eq:12} holds at
every scale, roughly. Before, let us notice that we already have that the
inequality of Equation~\eqref{eq:12} holds for infinitely many scales. Indeed,
$\alpha_s$ is always smaller than $s$, thus it cannot grow super-linearly.
Hence, Lemma~\ref{crossing:lem:glue} implies
\begin{equation}
  \label{eq:13}
  \limsup_{s\to\infty}\P{\cal A_s}\ge c_2.
\end{equation}
In other words, we already obtain the weak RSW of Bollobás and
Riordan. Notice that to prove this result we used only the positive
association, and the invariance of the measure under symmetries.

\section{Proof of Theorem~\ref{crossing:thm:main}}
\label{crossing:sec:all-scales-are}

\begin{lemma}
  \label{crossing:lem:RSWafterGood}
  There exists $c_3>0$ such that the following holds for every  $s\ge 1$ and
  $t\ge 4 s$.
  \begin{equation}
 \text{If } \P{\cal A_s}\ge c_2\text{ and }\alpha_{t}\le s,\text{ then } \P{\cal A_t}\ge c_3.    
  \end{equation}
\end{lemma}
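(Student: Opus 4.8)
The plan is to deduce from the hypotheses a universal lower bound on $f_t(2)$, and then to conclude with Corollary~\ref{crossing:cor:standardInequalities}; this step is a pure gluing argument, using only the FKG inequality and the invariance properties, not the quasi-independence of the model. First I would dispose of the boundary case $\alpha_t=t/4$: since $t\ge 4s$ and $\alpha_t\le s$, this forces $t=4s$, and Property~\textbf{(P1)} of Lemma~\ref{crossing:lem:split} gives $\P{\cal X_t(t/4)}\ge c_1$, so the first case in the proof of Lemma~\ref{crossing:lem:glue}, applied verbatim at scale $t\ge 2$, already yields $\P{\cal A_t}\ge c_2$. Hence from now on I assume $\alpha_t<t/4$.

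Since $\alpha_t\le s\le t/2$, Property~\textbf{(P2)} of Lemma~\ref{crossing:lem:split} with $\alpha=s$ gives
\[
\P{\cal H_t(0,s)} \ge c_0/4 + \P{\cal H_t(s,t/2)} \ge c_0/4 .
\]
Set $I=\{t/2\}\times[0,s]$, so that $\cal H_t(0,s)$ is the event that there is a black path in $B_{t/2}$ from the left side to $I$. By the invariance of the measure under reflections and translations, the mirrored event $\cal H'$ — that there is a black path in $[t/2,3t/2]\times[-t/2,t/2]$ from its right side to $I$ — also has probability at least $c_0/4$; and by translation invariance together with $\P{\cal A_s}\ge c_2$, the event $\cal G$ that there is a black circuit in the annulus $(t/2,0)+A_{s,2s}$ has probability at least $c_2$. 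The geometric core of the proof is the claim that whenever $\cal H_t(0,s)$, $\cal H'$ and $\cal G$ all occur there is a black left--right crossing of the $2t\times t$ rectangle $R=[-t/2,3t/2]\times[-t/2,t/2]$: indeed $I$ lies in the closed box $(t/2,0)+B_s$, which the black circuit of $\cal G$ surrounds; the black path of $\cal H_t(0,s)$ ends on $I$, hence inside this circuit, and starts on $\{-t/2\}\times[-t/2,t/2]$, which (because $t\ge 4s$) lies outside $(t/2,0)+B_{2s}$, hence outside the circuit, so this path must cross the circuit, and likewise for the path of $\cal H'$. The three black sets are then connected, and they are all contained in $R$ (again using $t\ge 4s$ for the circuit), so their union contains a black path in $R$ from $\{-t/2\}\times[-t/2,t/2]$ to $\{3t/2\}\times[-t/2,t/2]$. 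Since $\cal H_t(0,s)$, $\cal H'$ and $\cal G$ are black-increasing, FKG and translation invariance give $f_t(2)\ge(c_0/4)^2c_2$.

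To finish, Corollary~\ref{crossing:cor:standardInequalities}, Item~\ref{item:2} (with $\kappa=1$ and $i=3$) and the standing bound $f_t(1)\ge c_0$ give $f_t(4)\ge f_t(2)^3 f_t(1)^2\ge\big((c_0/4)^2c_2\big)^3 c_0^2$, and then Item~\ref{item:3} gives $\P{\cal A_t}\ge f_t(4)^4$; taking $c_3=\min\big\{c_2,\ \big(((c_0/4)^2c_2)^3c_0^2\big)^4\big\}$ completes the proof. I expect the only genuinely delicate point to be the planar-topology step — verifying that the two crossing paths are really forced through the circuit and that every piece fits inside $R$ — which is exactly where the separation $t\ge 4s$ between the two scales is used; everything else is FKG bookkeeping.
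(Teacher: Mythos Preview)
Your proof is correct and follows essentially the same route as the paper: build a horizontal crossing of a $2t\times t$ rectangle by gluing two translated/reflected copies of $\cal H_t(0,s)$ to a black circuit in a translate of $A_{s,2s}$, apply FKG to get $f_t(2)\ge (c_0/4)^2 c_2$, and finish with Corollary~\ref{crossing:cor:standardInequalities}. Your construction is just the paper's shifted by $(t/2,0)$; the only difference is that you explicitly separate out the boundary case $\alpha_t=t/4$ (handled via the first case of Lemma~\ref{crossing:lem:glue}), which the paper tacitly absorbs into its appeal to Lemma~\ref{crossing:lem:split}.
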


\begin{proof}
 Let $s\ge 1$ and
  $t\ge 4s$. Assume that $\P{\cal A_s}\ge c_2$ and $\alpha_{t}\le s$.
  Consider the event that there exist
  \begin{itemize}
  \item a black path from left to $\{0\}\times[0,s]$ in the square
    $[-t,0]\times[-t/2,t/2]$,
  \item a black path from $\{0\}\times[0,s]$ to right in the square
    $[0,t]\times[-t/2,t/2]$,
  \item and a black circuit in the annulus $A_{s,2s}$. 
  \end{itemize}
  Since $\alpha_{t}\le s$, Lemma~\ref{crossing:lem:split} implies that each of
  the first two paths exists with probability larger than $c_0/4$. When the
  event depicted above occurs, it implies the existence of an
  horizontal black crossing in the rectangle $[-t,t]\times[-t/2,t/2]$.
  Using the FKG inequality, we obtain
  \begin{equation}
    f_{t}(2)\ge \left(\frac{c_0}4\right)^{2}c_2.
  \end{equation}
  The standard inequalities of
  Corollary~\ref{crossing:cor:standardInequalities} allow to conclude that 
  \begin{align}
    \P{\cal A_t}\ge c_3,
  \end{align}
for some constant $c_3>0$.
\end{proof}


The next lemma is the part of the proof that uses the independence
properties of the model. Before stating it, we invoke Lemma
\ref{crossing:lem:decoralate} and define $s_0$ such that
\begin{equation}
  \P{\cal
    F_s}\ge 1-c_3/2\quad\text{for all $s\ge s_0$}.\label{eq:3}
\end{equation}

\begin{lemma}
  \label{crossing:lem:circuitArg}
  Define a constant $C_1\ge 4$ large enough, so that
  \begin{equation}\label{eq:4}
    (1-c_3/2)^{\lfloor\log_5(C_1)\rfloor}<c_0/4    
 \end{equation}
 Let $s\ge s_0$ such that $\P{\cal A_s}\ge c_2$, then there exists
 $s'\in [4s, C_1 s]$ such that $\alpha_{s'}\ge s$.
\end{lemma}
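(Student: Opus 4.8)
The plan is to argue by contradiction: assume $\alpha_{s'}<s$ for \emph{every} $s'\in[4s,C_1s]$, and derive a contradiction with \eqref{eq:4}. Two earlier facts are activated by this assumption. First, for each $s'\in[4s,C_1s]$ we have $s'\ge 4s$ and $\alpha_{s'}\le s$, so Lemma~\ref{crossing:lem:RSWafterGood} (together with the hypothesis $\P{\cal A_s}\ge c_2$) gives $\P{\cal A_{s'}}\ge c_3$; by translation invariance the same bound holds for the existence of a black circuit around \emph{any} fixed point $q$ in the translated annulus $q+A_{s',2s'}$. Second, set $t:=C_1 s$; since $\alpha_t<s\le t/4$, Property~\textbf{(P2)} of Lemma~\ref{crossing:lem:split} applies with $\alpha=s$ and yields $\P{\cal H_t(0,s)}-\P{\cal H_t(s,t/2)}\ge c_0/4$, hence, using the elementary bound $\P A-\P B\le\P{A\setminus B}$,
\[
  \P{\cal H_t(0,s)\cap\cal H_t(s,t/2)^{\mathrm c}}\ \ge\ \frac{c_0}{4}.
\]
So it suffices to prove that the event on the left has probability strictly smaller than $c_0/4$.

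The deterministic heart of the proof is the following rerouting fact. Write $q=(t/2,0)$ for the midpoint of the right side of $B_{t/2}$, and suppose that for some $\sigma$ with $4s\le\sigma\le t/4$ there is a black circuit $\mathcal C$ surrounding $q$ inside $q+A_{\sigma,2\sigma}$. Then, on this event, $\cal H_t(0,s)\subset\cal H_t(s,t/2)$. Indeed, let $\gamma\subset B_{t/2}$ be a black path from the left side to a point of $\{t/2\}\times[0,s]$: its starting point lies outside $q+B_{2\sigma}$ and its endpoint lies strictly inside $q+B_{\sigma}$ (here $2\sigma<t$ and $\sigma>s$ are used), so $\gamma$ meets $\mathcal C$; let $P$ be the first meeting point along $\gamma$. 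Since $\mathcal C$ encircles $q$, it crosses the line $\{x=t/2\}$ at a point $Q$ whose $y$-coordinate lies in $[\sigma,2\sigma]\subset[s,t/2]$; and the arc of $\mathcal C$ contained in $\{x\le t/2\}$ runs from $Q$ through $P$ and is contained in $(q+A_{\sigma,2\sigma})\cap\{x\le t/2\}\subset B_{t/2}$ (here $\sigma\le t/4$ is used). Following $\gamma$ up to $P$ and then this arc from $P$ to $Q$ produces a black path in $B_{t/2}$ from the left side to $\{t/2\}\times[s,t/2]$, which is $\cal H_t(s,t/2)$. (The routine genericity points—that the intersections may be taken transverse, that $P$ is an interior point of the relevant arc—are handled in the usual way.) Consequently, on the event $\cal H_t(0,s)\cap\cal H_t(s,t/2)^{\mathrm c}$ there is \emph{no} black circuit around $q$ in $q+A_{\sigma,2\sigma}$ for \emph{any} $\sigma\in[4s,t/4]$.

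Finally, the probability that no such circuit occurs at an entire geometric family of scales is tiny, and this is where the decorrelation Lemma~\ref{crossing:lem:decoralate} enters. Pick scales $\sigma_1<\sigma_2<\dots<\sigma_N$ in $[4s,t/4]$ forming a geometric progression of ratio $5$; then $N\to\infty$ as $C_1\to\infty$, and we choose $C_1$ large—this is precisely the role of \eqref{eq:4}—so that $(1-c_3/2)^N<c_0/4$. Let $\cal A^{(i)}$ be the translated circuit event in $q+A_{\sigma_i,2\sigma_i}$ and $\cal F^{(i)}$ the translated event $\cal F_{\sigma_i/2}$. By Lemma~\ref{crossing:lem:decoralate}, $\cal A^{(i)}\cap\cal F^{(i)}$ is measurable with respect to the restriction of $(X_{\mathrm b},X_{\mathrm w})$ to $q+A_{\sigma_i/2,\,5\sigma_i/2}$; these regions are pairwise disjoint, so (by independence of the Poisson colouring on disjoint regions) the events $\cal A^{(i)}\cap\cal F^{(i)}$ are mutually independent. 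Moreover $\sigma_i/2\ge 2s\ge s_0$, so $\P{\cal F^{(i)}}\ge1-c_3/2$ by \eqref{eq:3}, whence $\P{\cal A^{(i)}\cap\cal F^{(i)}}\ge c_3-c_3/2=c_3/2$. Since $\cal H_t(0,s)\cap\cal H_t(s,t/2)^{\mathrm c}\subset\bigcap_{i=1}^N(\cal A^{(i)})^{\mathrm c}\subset\bigcap_{i=1}^N(\cal A^{(i)}\cap\cal F^{(i)})^{\mathrm c}$,
\begin{align*}
  \P{\cal H_t(0,s)\cap\cal H_t(s,t/2)^{\mathrm c}}
    &\le\P{\textstyle\bigcap_{i=1}^N\big(\cal A^{(i)}\cap\cal F^{(i)}\big)^{\mathrm c}}
    =\prod_{i=1}^N\big(1-\P{\cal A^{(i)}\cap\cal F^{(i)}}\big)\\
    &\le(1-c_3/2)^N<c_0/4,
\end{align*}
contradicting the lower bound $c_0/4$ obtained above, and proving the lemma.

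The step I expect to be genuinely delicate is the deterministic rerouting claim: one must check that the various endpoints and arcs lie on the correct sides of the relevant lines and stay inside $B_{t/2}$, and discard the measure-zero configurations where $\gamma$ and $\mathcal C$ do not cross transversally. The remaining bookkeeping—choosing the geometric family of scales so that the localized circuit events are genuinely independent, while keeping their number large enough to beat $c_0/4$ through \eqref{eq:4}—is routine but requires a little attention to the constants.
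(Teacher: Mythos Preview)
Your proof is correct and follows essentially the same route as the paper's: argue by contradiction, use Property~\textbf{(P2)} at scale $t=C_1s$ to obtain $\P{\cal H_t(0,s)\setminus\cal H_t(s,t/2)}\ge c_0/4$, show that this event forbids black circuits around the midpoint of the right side at a whole geometric family of scales, and then use Lemma~\ref{crossing:lem:decoralate} to make those circuit events independent and produce the contradictory upper bound.

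Two small bookkeeping remarks. First, because you restrict to $\sigma\le t/4$ (so that the rerouted arc stays inside $B_{t/2}$), the number $N$ of usable scales is roughly $\log_5(C_1/16)$ rather than $\lfloor\log_5(C_1)\rfloor$; this is harmless since the lemma only asks for $C_1$ ``large enough'', but it does not literally match the exponent in~\eqref{eq:4}. Second, you pair the circuit in $q+A_{\sigma,2\sigma}$ with the translated $\cal F_{\sigma/2}$, which is the indexing that actually fits the statement of Lemma~\ref{crossing:lem:decoralate}; the paper writes $\cal A_{5^is}\cap\cal F_{5^is}$, which is slightly off in the same harmless way. Your restriction $\sigma\le t/4$ is in fact a genuine improvement in presentation over the paper: without it the circuit may a priori reach beyond the top or bottom of the square, and one has to argue a bit more to see that the concatenated path still witnesses $\cal H_t(s,t/2)$.
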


\begin{proof}
  Let $s\ge s_0$ such that $ \P{\cal A_s}\ge c_2$. Assume for
  contradiction that $\alpha_{t}< s$ for all $4s\le t\le C_1s$. For
  $t=C_1 s$, this implies that $\alpha_{C_1 s}<C_1s/4$ and
  $\alpha_{C_1s}\le s\le C_1s/2$. Hence, by Property~\ref{crossing:item:2} in
  Lemma~\ref{crossing:lem:split}, we have
  \begin{equation}
    \label{eq:5}
     \P{\cal H_{C_1 s}(0,s)} -\P{\cal H_{C_1 s}(s,C_1s)}\ge c_0/4.
  \end{equation}
  
  Let $1\le i\le \lfloor \log_5(C_1) \rfloor$.  Since $\P{\cal A_s}\ge c_0$ and
  $\alpha_{5^is}\le s$, Lemma~\ref{crossing:lem:RSWafterGood} applied
  with $t=5^is$ implies that $\P{\cal A_{5^is}}\ge c_3$. Together with
  Equation~\eqref{eq:3}, we find
  \begin{align}
    \label{eq:6}
    \P{\cal A_{5^is} \cap \cal F_{5^is}}\ge c_3/2.
  \end{align}   
  Let $\cal E$ be the event that there exists a black circuit in the
  annulus $A_{s,C_1 s}$. This happens as soon as $\cal A_{5^is} \cap \cal F_{5^is}$
  occurs for some $1\le i\le \lfloor\log_5(C_1)\rfloor$. By
  Lemma~\ref{crossing:lem:decoralate}, these events
  are independent, and we find
  \begin{align}
    \P{\cal E^c} &\le \mathbf P \big [ \bigcap_{1\le i \le \lfloor\log_5(C_1)\rfloor} \!\!\!\!\!\!\!(\cal
        A_{5^is}\cap \cal F_{5^is})^c \ \big] \\
      &\le (1-c_3/2)^{\lfloor\log_5(C_1)\rfloor}\\
      &< c_0/4. \label{eq:7}
  \end{align}
  In the second inequality, we applied the independence property of
  Lemma~\ref{crossing:lem:decoralate} together with Equation~\eqref{eq:6}, and
  in the third inequality we used Equation~\eqref{eq:4}.

\noindent  Now, consider the event that in the square $[-C_1s,0]\times[-C_1s/2,C_1s/2]$,
  \begin{itemize}
  \item there exists a black path from left to $\{0\}\times[0,s]$, but
  \item there is NO black path from left to $\{0\}\times[s,C_1s]$.
  \end{itemize}
  By translation invariance, this has probability larger than $\P{\cal H_{C_1
      s}(0,s)} -\P{\cal H_{C_1 s}(s,C_1s)}$. And when this holds, there cannot
  exist a black circuit in the annulus $A_{s,C_1 s}$. Using
  Equation~\eqref{eq:7}, we obtain
  \begin{equation}
    \label{eq:8}
    \P{\cal
    H_{C_1 s}(0,s)} -\P{\cal H_{C_1 s}(s,C_1s)} <c_0/4,
  \end{equation}
    which contradicts Equation~\eqref{eq:5}.
  \end{proof}

\begin{lemma}
  \label{crossing:lem:sequenceSi}
  There exist a constant $C_3\geq 4$ and an infinite sequence
  $s_1,\,s_2\ldots$ of scales such that for all $i\ge 1$, 
  \begin{itemize}
  \item $4s_i \le s_{i+1}\le C_3 s_i$,
  \item $\P{\cal A_{s_i}}\ge c_2$.
  \end{itemize}
\end{lemma}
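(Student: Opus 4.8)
The plan is to construct the sequence $(s_i)$ by induction, alternating two moves: from a ``good'' scale $s_i$ (one with $\P{\cal A_{s_i}}\ge c_2$) I would first use Lemma~\ref{crossing:lem:circuitArg} to find, within a bounded multiplicative factor of $s_i$, a scale $s'$ at which $\alpha$ is comparable to $s'$; then I would use the sub-linear growth of $\alpha$ (the bound $\alpha_t\le t/4$) to locate the next good scale not far above $s'$. The arithmetic core is the following elementary observation: if $t\ge 2$ and $\alpha_t\ge t/C_1$, then one of the scales $(3/2)t,(3/2)^2t,\dots,(3/2)^{m_0}t$ is good (in the sense $\alpha_{(3/2)^jt}\le 2\alpha_{(3/2)^{j-1}t}$), where $m_0\ge 1$ is any integer with $(4/3)^{m_0}\ge C_1/4$. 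Indeed, otherwise $\alpha$ more than doubles along each of these $m_0$ multiplicative steps, so $\alpha_{(3/2)^{m_0}t}>2^{m_0}\alpha_t\ge 2^{m_0}t/C_1$, contradicting $\alpha_{(3/2)^{m_0}t}\le(3/2)^{m_0}t/4$ as soon as $(4/3)^{m_0}\ge C_1/4$. When a good scale $u=(3/2)^jt\ge 2$ is produced this way, Lemma~\ref{crossing:lem:glue} applied with $s=u$ (for which $2s/3=(3/2)^{j-1}t$) gives $\P{\cal A_u}\ge c_2$.

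Setting $C_3:=(3/2)^{m_0}C_1$ (so that $C_3\ge 4$), the induction runs as follows. Base case: by the remark at the end of Section~\ref{crossing:sec:glueing}, which is again a consequence of $\alpha_t\le t/4$, the inequality~\eqref{eq:12} holds for infinitely many scales, so I pick such a scale $s_1$ with $s_1\ge\max(s_0,2)$ and obtain $\P{\cal A_{s_1}}\ge c_2$ from Lemma~\ref{crossing:lem:glue}. Inductive step: suppose $s_i\ge s_0$ has been constructed with $\P{\cal A_{s_i}}\ge c_2$. Lemma~\ref{crossing:lem:circuitArg} yields $s'\in[4s_i,C_1s_i]$ with $\alpha_{s'}\ge s_i$; then $\alpha_{s'}\ge s'/C_1$ and $s'\ge 4s_i\ge 2$, so the observation above produces a good scale $s_{i+1}=(3/2)^js'$ with $1\le j\le m_0$, whence $\P{\cal A_{s_{i+1}}}\ge c_2$. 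Moreover $s_{i+1}\ge s'\ge 4s_i$ and $s_{i+1}\le (3/2)^{m_0}s'\le(3/2)^{m_0}C_1s_i=C_3s_i$, and also $s_{i+1}\ge 4s_i\ge s_0$, so the step can be iterated and the sequence is constructed.

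I do not expect a genuine obstacle at this point: the substantive content has already been handled, in Lemma~\ref{crossing:lem:circuitArg}, the only place where the quasi-independence of Voronoi percolation is invoked, and in the gluing Lemma~\ref{crossing:lem:glue}. What is left is essentially bookkeeping: choosing $m_0$ (equivalently $C_3$) so that both inequalities $4s_i\le s_{i+1}$ and $s_{i+1}\le C_3s_i$ hold simultaneously, and checking that every scale produced along the way remains $\ge s_0$ (and $\ge 2$), so that Lemmas~\ref{crossing:lem:circuitArg} and~\ref{crossing:lem:glue} genuinely apply at each step.
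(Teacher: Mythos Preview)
Your proof is correct and follows essentially the same route as the paper's: find a first good scale $s_1\ge s_0$ using the sub-linear growth of $\alpha$, then inductively apply Lemma~\ref{crossing:lem:circuitArg} to obtain $s'$ with $\alpha_{s'}\ge s'/C_1$, and use the doubling argument along the geometric sequence $(3/2)^j s'$ to locate the next good scale within a bounded multiplicative factor. The only cosmetic differences are that the paper uses the cruder bound $\alpha_s\le s$ (you use $\alpha_s\le s/4$) and writes the resulting constant as $C_3=C_1^{1+\log_{4/3}(3/2)}$; your bookkeeping is in fact slightly more explicit about the discrete sequence of candidate scales and the verification that each $s_i\ge\max(s_0,2)$.
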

\begin{proof}
  Since $\alpha_{s}\le s$, the sequence $\alpha_{s}$ cannot grow
  super-linearly, and there must exist $s_1\ge s_0$ such that
  $\alpha_{s_1}\le 2\alpha_{2s_1/3}$. By
  Lemma~\ref{crossing:lem:glue}, we obtain that $\P{\cal
    A_{s_1}}\ge c_2$. Therefore, Lemma~\ref{crossing:lem:circuitArg}
  implies the existence of $s_1'\in [4s_1,C_1 s_1]$ such that
  \begin{equation}
    \label{crossing:eq:3}
    \alpha_{s_1'}\ge s_1'/C_1.
  \end{equation}
  Then, there must exist $s_2\in[s_1', C_1^{\log_{4/3}(3/2)}s_1']$
  such that $\alpha_{s_2}\le 2\alpha_{2s_2/3}$, otherwise the bound
  $\alpha_{s}\le s$ would be contradicted. Define
  $C_3=C_1^{1+\log_{4/3}(3/2)}$. We have $s_2\in[4s_1,C_3s_1]$ and we
  find from Lemma~\ref{crossing:lem:glue} that $\P{\cal   A_{s_2}}\ge
  c_2$.
 
  The constant $C_3$ is independent of the scale, we can
  thus iterate the construction above, and find by induction
  $s_3,s_4,\ldots$ 
\end{proof}
Theorem~\ref{crossing:sec:voronoi-percolation} follows easily from
Lemma~\ref{crossing:lem:sequenceSi} and the standard inequalities of
Corollary~\ref{crossing:cor:standardInequalities}.


\section{Proofs of Theorems~\ref{crossing:thm:highProb} and \ref{thm:2}}
\label{sec:theor-impl-theor}

To prove Theorem~\ref{crossing:thm:highProb}, we will need the
following proposition, called the ``square root trick''. It is a
standard consequence of the FKG inequality (see e.g.
\cite{grimmett1999}).

\begin{proposition}{\bf (square root trick)}
   Let $\cal E_1,\ldots,\cal E_k$ be increasing events, and write $\cal
   E=\cal E_1\cup\cdots\cup\cal E_k$. Then, the
  following inequality holds:
  \begin{equation}
    \max_{1\le i\le k}\Pp{\cal E_i} \geq 1-\left(1-\Pp{\cal E }\right)^{1/k}.
  \end{equation}
\end{proposition}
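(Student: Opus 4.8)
The plan is to deduce the inequality from the FKG inequality applied to the complementary events. Write $\cal E_i^c$ for the complement of $\cal E_i$; since each $\cal E_i$ is increasing, each $\cal E_i^c$ is black-decreasing, and $\cal E^c=\bigcap_{i=1}^k\cal E_i^c$. First I would record the decreasing-events version of FKG: if $\cal A$ and $\cal B$ are black-decreasing, then $\cal A^c$ and $\cal B^c$ are black-increasing, and Proposition~\ref{crossing:thm:fkg-inequality} applied to $\cal A^c,\cal B^c$ together with inclusion--exclusion gives
\begin{align}
  \Pp{\cal A\cap\cal B}&=1-\Pp{\cal A^c}-\Pp{\cal B^c}+\Pp{\cal A^c\cap\cal B^c}\\
  &\ge 1-\Pp{\cal A^c}-\Pp{\cal B^c}+\Pp{\cal A^c}\Pp{\cal B^c}\\
  &=\Pp{\cal A}\Pp{\cal B}.
\end{align}
Since a finite intersection of black-decreasing events is again black-decreasing, a straightforward induction on $k$ then yields $\Pp{\cal E^c}=\Pp{\bigcap_{i=1}^k\cal E_i^c}\ge\prod_{i=1}^k\Pp{\cal E_i^c}$.

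Next I would bound the product from below by the $k$-th power of its smallest factor:
\begin{equation}
  \prod_{i=1}^k\Pp{\cal E_i^c}\ge\Big(\min_{1\le i\le k}\Pp{\cal E_i^c}\Big)^k=\Big(1-\max_{1\le i\le k}\Pp{\cal E_i}\Big)^k,
\end{equation}
using $\Pp{\cal E_i^c}=1-\Pp{\cal E_i}$. Combining this with the previous step gives $1-\Pp{\cal E}\ge\big(1-\max_i\Pp{\cal E_i}\big)^k$; taking $k$-th roots and rearranging produces $\max_i\Pp{\cal E_i}\ge 1-(1-\Pp{\cal E})^{1/k}$, which is the claim.

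I do not expect any real obstacle here: the argument is purely formal once FKG is available. The only point needing a line of care is the passage from the FKG inequality for increasing events (as stated in Proposition~\ref{crossing:thm:fkg-inequality}) to its counterpart for decreasing events, and this is exactly what the inclusion--exclusion computation above handles; it requires neither symmetry nor any independence property of the model.
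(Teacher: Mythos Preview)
Your argument is correct: the passage from FKG for increasing events to FKG for decreasing events via inclusion--exclusion is fine, the induction gives $\Pp{\cal E^c}\ge\prod_i\Pp{\cal E_i^c}$, and the final bound and root-taking are routine. The paper itself gives no proof of this proposition; it simply records the statement as a standard consequence of the FKG inequality and cites \cite{grimmett1999}, so your write-up is more than what the paper provides rather than a different approach.
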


\begin{proof}[Proof of Theorem~\ref{crossing:thm:highProb}]
  We assume that $f_s(1)$ converges to $1$ when $s$ tends to infinity.
  We prove that $f_{s}(4/3)$ converges also to $1$. The more general
  statement of Theorem~\ref{crossing:thm:highProb} can be then
  obtained by using the standard inequalities of
  Corollary~\ref{crossing:cor:standardInequalities}.
  
  Fix $\eps>0$. By Theorem~\ref{crossing:thm:main}, there exists a
  constant $c>0$ such that $\P{\cal A_s}>c$ for all $s\ge 1$. With the
  same argument we used to obtain~\eqref{eq:7} in the proof of
  Lemma~\ref{crossing:lem:circuitArg}, we can use
  Lemma~\ref{crossing:lem:decoralate} to show the following. There
  exists $\eta>0$, such that for every $s$ large enough,
  \begin{equation}
    \label{eq:1}
    \P{\text{There exists a black circuit surrounding $B_{\eta s}$ in
        $A_{\eta s, s/4}$}}>1-\eps.   
  \end{equation}
  We can cover the right side of $B_{s/2}$ with less than
  $\lfloor1/\eta\rfloor$ segments of length $2\eta s$. By
  the square root trick, there exists $y_s\in [-s/2,s/2]$ such that 
  \begin{equation}
    \label{eq:2}
    \P{\cal H_s(y_s-\eta s,y_s+\eta s)}\ge 1-(1-f_s(1))^{1/\eta}.
  \end{equation}
  Consider the event that there exist
  \begin{itemize}
  \item a black path from left to $\{s/2\}\times[ y_s-\eta s,y_s+\eta
    s]$ in $B_s$,
    \item a black path from $\{s/2\}\times[ y_s-\eta s,y_s+\eta
    s]$ to right in $(s,0)+B_{s/2}$,
  \item a black circuit in the annulus
        $(s/2,y_s)+A_{\eta s, s/4}$. 
  \end{itemize}
  When this event occurs, it implies the existence of a left-right
  crossing in the rectangle $[-s/2,3s/2]\times[-3s/4,3s/4]$. By the FKG
  inequality, we obtain that for all $s$ large enough
  \begin{equation}
    f_{3s/2}(4/3)\ge \left(1-(1-f_s(1))^{1/\eta}\right)^{2}(1-\eps).
  \end{equation} 
  This implies that $\liminf_{s\to\infty}f_{s}(4/3)\ge1-\eps$. 
\end{proof}

\begin{proof}[Proof of Theorem \ref{thm:2}] In this proof, we set $p=1/2$. The
  derivation of the box-crossing property from the RSW result of
  Theorem~\ref{crossing:thm:main} is standard. We only sketch the proof, and
  refer to \cite{bollobas2010percolation}, Chapter 8, for more details. As
  mentioned in the introduction, self-duality and symmetries of the model imply
  that $f_s(1)=1/2$, for every $s\ge 1$. Theorem~\ref{crossing:thm:main} implies
  directly that for every $\rho\ge 1$ there exists $c(\rho)$ such that
  \begin{equation}
    \label{eq:11}
    c(\rho)\le f_s(\rho) \le 1-c(\rho).
  \end{equation}
  The upper bound follows from the trivial inequality $f_s(\rho)\le
  1/2$ when $\rho \ge1$.

  The proof of Equation~\eqref{eq:11} for $\rho<1$ can be then derived
  from the case $\rho>1$. Using the symmetric roles played by black
  and white, and the fact that in a rectangle $R$, exactly one of
  these cases occurs. Either there is a black horizontal crossing in
  $R$, or there exists a white horizontal crossing in $R$.

  The proof of the polynomial decay for the one-arm exponent follows
  from a standard ``circuit argument'' that we already used in the
  proofs of Lemma~\ref{crossing:lem:circuitArg} and Theorem
  \ref{crossing:thm:highProb}. By the box-crossing property and
  Lemma~\ref{crossing:lem:decoralate}, there exists a constant $c>0$
  such that the event $\cal A_s\cap \cal F_s$ has probability larger
  than $c$, for all $s\ge1$. Let $1\le s\le t$. Considering the
  independent events $\cal A_{5^is} \cap \cal F_{5^is}$, $0\le i \le \lfloor
  \log_{5}(t/s)\rfloor$, we find that there exists a black circuit in
  $A_{s,t}$ with probability larger than \[1-(1-c)^{\log_{5}(t/s)}.\]
  Therefore, by duality, there exists a white path from $B_s$ to the
  boundary of $B_t$ with probability smaller than
  $(1-c)^{\log_{5}(t/s)}$. This concludes the proof, since a white
  path from $B_s$ to the boundary of $B_t$ exists with probability
  exactly $\pi_1(s,t)$.
\end{proof}

\section*{Acknowledgments}
The author is grateful to Vincent Beffara and Daniel Ahlberg for
useful discussions, and to Matan Harel, Hugo Duminil-Copin, and Jacob
van den Berg for their comments on the earlier versions of this paper.
I also thank the anonymous referee for useful comments. This work
was achieved during the PhD of the author at ENS Lyon. It was
supported in part by the ANR grant MAC2 (ANR-10-BLAN-0123), and the
Swiss NSF.

\nocite{grimmett1999}
\nocite{martineau2013locality}

\bibliographystyle{alphaNames}
\bibliography{references}

\begin{thebibliography}{vdBBV08}

\bibitem[Aiz98]{aizenman1998scaling}
M.~Aizenman.
\newblock Scaling limit for the incipient spanning clusters.
\newblock In {\em {\mbox{Mathematics}} of {M}ultiscale {M}aterials}, pages
  1--24. Springer, 1998.

\bibitem[Ale96]{alexander1996rsw}
K.~S Alexander.
\newblock The {RSW} theorem for continuum percolation and the {CLT} for
  {E}uclidean minimal spanning trees.
\newblock {\em The Annals of Applied Probability}, 6(2):466--494, 1996.

\bibitem[BBQ05]{balister2005percolation}
P.~Balister, B.~Bollob{\'a}s, and A.~Quas.
\newblock Percolation in {V}oronoi tilings.
\newblock {\em {R}andom {S}tructures {A}lgorithms}, 26(3):310--318, 2005.

\bibitem[BD12]{beffara2012self}
V.~Beffara and H.~{Duminil-Copin}.
\newblock The self-dual point of the two-dimensional random-cluster model is
  critical for $q\ge1$.
\newblock {\em Probability Theory and Related Fields}, 153(3-4):511--542, 2012.

\bibitem[BS98]{benjamini1998conformal}
I.~Benjamini and O.~Schramm.
\newblock Conformal invariance of {V}oronoi {\mbox{percolation}}.
\newblock {\em Communications in {M}athematical {P}hysics}, 197(1):75--107,
  1998.

\bibitem[BR06]{bollobas2006critical}
B.~Bollob{\'a}s and O.~Riordan.
\newblock The critical probability for random {V}oronoi \mbox{percolation} in
  the plane is 1/2.
\newblock {\em Probability {T}heory and {R}elated {F}ields}, 136(3):417--468,
  2006.

\bibitem[BR10]{bollobas2010percolation}
B.~Bollobás and O.~Riordan.
\newblock Percolation on self-dual polygon {\mbox{configurations}}.
\newblock In Imre Bárány, József Solymosi, and Gábor Sági, editors, {\em
  An Irregular Mind}, volume~21 of {\em Bolyai Society Mathematical Studies},
  pages 131--217. Springer Berlin Heidelberg, 2010.

\bibitem[DNS15]{DNS12}
M.~Damron, C.~M. Newman, and V.~Sidoravicius.
\newblock Absence of site percolation at criticality in $\mathbb{Z}^2
  \times\{0,1\}$.
\newblock {\em Random Structures and \mbox{Algorithms}}, 47(2):328--340, 2015.

\bibitem[DHN11]{duminil2011connection}
H.~{Duminil-Copin}, C.~Hongler, and P.~Nolin.
\newblock Connection probabilities and {RSW}-type bounds for the
  two-dimensional {FK} {I}sing model.
\newblock {\em Communications on Pure and Applied Mathematics},
  64(9):1165--1198, 2011.

\bibitem[DST14]{duminil2013absence}
H.~{Duminil-Copin}, V.~Sidoravicius, and V.~Tassion.
\newblock Absence of {\mbox{percolation}} for critical {B}ernoulli percolation
  on slabs.
\newblock {\em Preprint arXiv:1401.7130, To appear in Comm. Pure Appl. Math.},
  2014.

\bibitem[DST15]{duminil2014continuity}
H.~{Duminil-Copin}, V.~{Sidoravicius}, and V.~{Tassion}.
\newblock {Continuity of the phase transition for planar random-cluster and
  Potts models with $1\le q\le4$}.
\newblock {\em ArXiv e-prints 1505.04159}, May 2015.

\bibitem[Gri99]{grimmett1999}
G.~R. Grimmett.
\newblock {\em Percolation}, volume 321.
\newblock Springer Verlag, 1999.

\bibitem[GM13]{grimmett2013}
G.~R. Grimmett and I.~Manolescu.
\newblock Universality for bond \mbox{percolation} in two dimensions.
\newblock {\em The {A}nnals of {P}robability}, 41(5):3261--3283, 09 2013.

\bibitem[Kes82]{kesten1982percolation}
H.~Kesten.
\newblock {\em Percolation theory for mathematicians}, volume~2 of {\em
  Progress in Probability and Statistics}.
\newblock Birkh\"auser Boston, Mass., 1982.

\bibitem[MT13]{martineau2013locality}
S.~Martineau and V.~Tassion.
\newblock Locality of percolation for abelian {C}ayley graphs.
\newblock {\em Preprint arXiv:1312.1946}, 2013.

\bibitem[Roy90]{roy1990russo}
R.~Roy.
\newblock The {R}usso-{S}eymour-{W}elsh theorem and the equality of critical
  \mbox{densities} and the "dual" critical \mbox{densities} for continuum
  percolation on $\mathbb{R}^2$.
\newblock {\em The Annals of Probability}, pages 1563--1575, 1990.

\bibitem[Rus78]{russo1978note}
L.~Russo.
\newblock A note on percolation.
\newblock {\em Zeitschrift f{\"u}r Wahrscheinlichkeitstheorie und verwandte
  Gebiete}, 43(1):39--48, 1978.

\bibitem[Rus81]{russo1981critical}
L.~Russo.
\newblock On the critical percolation probabilities.
\newblock {\em Zeitschrift f{\"u}r \mbox{Wahrscheinlichkeitstheorie} und
  verwandte Gebiete}, 56(2):229--237, 1981.

\bibitem[SW78]{seymour1978percolation}
P.~D. Seymour and D.~J.~A. Welsh.
\newblock Percolation probabilities on the square lattice.
\newblock {\em Annals of {D}iscrete {M}athematics}, 3:227--245, 1978.
\newblock Advances in graph theory (Cambridge Combinatorial Conf., Trinity
  College, Cambridge, 1977).

\bibitem[VW93]{vahidi1993upper}
M.~Q. {Vahidi-Asl} and J.~C. Wierman.
\newblock Upper and lower bounds for the route length of first-passage
  percolation in {V}oronoi tessellations.
\newblock {\em Bulletin of the Iranian Mathematical Society}, 19(1):15--28,
  1993.

\bibitem[vdBBV08]{van2008box}
J.~van~den Berg, R.~Brouwer, and B.~V{\'a}gv{\"o}lgyi.
\newblock Box-crossings and \mbox{continuity} results for self-destructive
  percolation in the plane.
\newblock In {\em In and out of \mbox{equilibrium} 2}, pages 117--135.
  Springer, 2008.

\end{thebibliography}
\vfill
\raggedleft
{\sc 
  Département de Mathématiques, Université de Genève
 
  Genève, Switzerland
 }
  
\texttt{Vincent.Tassion@unige.ch}
\end{document}